\newtheorem{theorem}{Theorem}
\newtheorem{definition}[theorem]{Definition}
\newtheorem{lemma}[theorem]{Lemma}
\newtheorem{corollary}[theorem]{Corollary}
\newenvironment{proof}{\small{\bf Proof.}}%
  {\hfill$\Box$\normalsize\bigskip}
\newtheorem{proposition}[theorem]{Proposition}
\newcommand{\eqsepv}{\; , \enspace}       
\newcommand{\eqfinv}{\; ,}                
\newcommand{\eqfinp}{\; .}
\renewcommand{\bar}{\overline}
\newcommand{\mtext}[1]{\,\mbox{#1}\,} 
\newcommand{\module}[1]{| #1 |}
\newcommand{\norm}[1]{\|#1\|}
\newcommand{\sequence}[2]{\left\{#1\right\}_{#2}}           
\newcommand{\proscal}[2]{\left\langle#1\:,#2\right\rangle}  
\newcommand{\np}[1]{(#1)}                                   
\newcommand{\bp}[1]{\big(#1\big)}                           
\newcommand{\Bp}[1]{\Big(#1\Big)}                           
\newcommand{\bgp}[1]{\bigg(#1\bigg)}                        
\newcommand{\Bc}[1]{\Big[#1\Big]}                           
\newcommand{\na}[1]{\{#1\}}                                 
\newcommand{\ba}[1]{\big\{#1\big\}}                         
\newcommand{\Ba}[1]{\Big\{#1\Big\}}                         
\newcommand{\RR}{{\mathbb R}} 
\newcommand{\NN}{{\mathbb N}} 
\newcommand{\AAA}{{\mathbb A}} 
\newcommand{\BB}{{\mathbb B}} 
\newcommand{\defset}[2]{\left\{#1\:\left|\:#2\right.\right\}}
\newcommand{\uncertain}{w}
\newcommand{\Uncertain}{W}
\newcommand{\UNCERTAIN}{{\mathbb W}}
\def\stackops#1#2#3{%
  \mathrel{\vbox{\offinterlineskip\ialign{%
    \hfil##\hfil\cr
    $#1$\cr
    \noalign{\kern#3}
    $#2$\cr}}}}
\def\plusdot{\stackops{\cdot}{+}{-2.5ex}}
\newcommand{\LowPlus}{\plusdot}  
\newcommand{\UppPlus}{\dotplus}
\newcommand{\PRIMAL}{{\mathbb X}}
\newcommand{\Primal}{X}
\newcommand{\primal}{x}
\newcommand{\DUAL}{{\mathbb Y}}
\newcommand{\dual}{y}
\newcommand{\ExtendedReals}{[-\infty,+\infty]}
\newcommand{\barRR}{\overline{\mathbb R}}
\newcommand{\cardinal}[1]{\module{#1}}
\newcommand{\dom}{\mathrm{dom}}
\newcommand{\convexhull}{\mathrm{co}}
\newcommand{\closedconvexhull}{\overline{\mathrm{co}}}
\newcommand{\fonctionprimal}{f} 
\newcommand{\fonctiondual}{g} 
\newcommand{\fonctionprimalbis}{h} 
\newcommand{\fonctionuncertain}{h} 
\newcommand{\InfimalPostcomposition}[2]{#1\rhd#2} 
\newcommand{\lzero}{\ell_0}
\newcommand{\pseudonormlzero}{$l_0$~pseudonorm}
\newcommand{\LevelSet}[2]{#1^{\leq #2}}
\newcommand{\LevelCurve}[2]{#1^{= #2}}
\newcommand{\coupling}{c}
\newcommand{\Capra}{Caprac} 
\newcommand{\couplingCAPRA}{\cent} 
\newcommand{\LFM}[1]{#1^{\star}}
\newcommand{\LFMr}[1]{#1^{\star'}}
\newcommand{\LFMbi}[1]{#1^{\star\star'}}
\newcommand{\minusLFM}[1]{#1^{(-\star)}}
\newcommand{\minusLFMr}[1]{#1^{(-\star)'}}
\newcommand{\minusLFMbi}[1]{#1^{(-\star)(-\star)'}}
\newcommand{\SFM}[2]{#1^{#2}}
\newcommand{\SFMbi}[2]{#1^{#2{#2}'}}
\newcommand{\triplenorm}[1]{||| #1 |||}
\newcommand{\SymmetricGauge}[2]{{#2}_{(#1)}^{\mathrm{sgn}}}
\newcommand{\SymmetricGaugeNorm}[2]{\norm{#2}_{(#1)}^{\mathrm{sgn}}}
\newcommand{\SupportNorm}[2]{\norm{#2}_{(#1)}^{\mathrm{sn}}}
\newcommand{\SupportBall}[2]{{#2}_{(#1)}^{\mathrm{sn}}}
\newcommand{\SPHERE}{S} 
\newcommand{\ESPHERE}{{\mathbb S}} 
\newcommand{\BALL}{B}
\newcommand{\EBALL}{{\mathbb B}}
\newcommand{\normalized}{n}
\title{A Suitable Conjugacy for the $l_0$ Pseudonorm}
\author{Jean-Philippe Chancelier and 
Michel De Lara, \\ CERMICS, \'Ecole des Ponts ParisTech}
\begin{document}

\maketitle

\begin{abstract}
The so-called \pseudonormlzero\ on~$\RR^d$
counts the number of nonzero components of a vector.
It is well-known that the \pseudonormlzero\ is not convex, as 
its Fenchel biconjugate is zero.
In this paper, we introduce a suitable conjugacy, induced by 
a novel coupling, \Capra, 
having the property of being constant along primal rays, 
like the \pseudonormlzero.
The \Capra\ coupling belongs to the class of one-sided linear couplings,
that we introduce. We show that they induce conjugacies 
that share nice properties with the classic Fenchel conjugacy.
For the \Capra\ conjugacy, induced by the coupling \Capra, 
we prove that the \pseudonormlzero\ is equal to its biconjugate:
hence, the \pseudonormlzero\ is \Capra-convex 
in the sense of generalized convexity.
As a corollary, we show that the \pseudonormlzero\ coincides,
on the sphere, with a convex lsc function.
We also provide expressions for conjugates in terms 
of two families of dual norms, the $2$-$k$-symmetric gauge norms
and the $k$-support norms. 
\end{abstract}

{{\bf Key words}: \pseudonormlzero, coupling, Fenchel-Moreau conjugacy,
$2$-$k$-symmetric gauge norms, $k$-support norms.}


\section{Introduction}

The \emph{counting function}, also called \emph{cardinality function}
or \emph{\pseudonormlzero}, 
counts the number of nonzero components of a vector in~$\RR^d$.
It is related to the rank function defined over matrices
\cite{Hiriart-Ururty-Le:2013}.
It is well-known that the \pseudonormlzero\ is 
lower semi continuous but is not convex.
This can be deduced from the computation of its Fenchel biconjugate,
which is zero.

In this paper, we display a suitable conjugacy 
for which we show that the \pseudonormlzero\ is ``convex''
in the sense of generalized convexity (equal to its biconjugate). 
As a corollary, we also show that the \pseudonormlzero\ coincides,
on the sphere, with a convex lsc function.

The paper is organized as follows.
In Sect.~\ref{The_constant_along_primal_rays_coupling},
we provide background on Fenchel-Moreau conjugacies,
then introduce a novel class of \emph{one-sided linear couplings},
which includes the 
\emph{constant along primal rays coupling~$\couplingCAPRA$ (\Capra)}.
We show that one-sided linear couplings induce conjugacies that share nice properties
with the classic Fenchel conjugacy, by giving expressions for 
conjugate and biconjugate functions.
We elucidate the structure 
of \Capra-convex functions.
Then, 
in Sect.~\ref{CAPRAC_conjugates_and_biconjugates_related_to_the_pseudo_norm},
we relate the \Capra\ conjugate and biconjugate 
of the \pseudonormlzero, 
the characteristic functions of its level sets
and the symmetric gauge norms.
In particular, we show that the \pseudonormlzero\ is 
\Capra\ biconjugate (a \Capra-convex function), 
from which we deduce
that it coincides, on the sphere, with a convex lsc function.
The Appendix~\ref{Appendix} gathers 
background on J.~J. Moreau lower and upper additions,
properties of $2$-$k$-symmetric gauge norms, and 
properties of the \pseudonormlzero\ level sets.

\section{The constant along primal rays coupling (\Capra)}
\label{The_constant_along_primal_rays_coupling}

After having recalled background on Fenchel-Moreau conjugacies
in~\S\ref{Background_on_Fenchel-Moreau_conjugacies},
we introduce \emph{one-sided linear couplings}
in~\S\ref{One-sided_linear_couplings}, 
and finally 
the \emph{constant along primal rays coupling~$\couplingCAPRA$ (\Capra)}
in~\S\ref{Constant_along_primal_rays_coupling}.

\subsection{Background on Fenchel-Moreau conjugacies}
\label{Background_on_Fenchel-Moreau_conjugacies}

We review general concepts and notations, then we focus 
on the special case of the Fenchel conjugacy.
We denote \( \barRR=\ExtendedReals \). 
Background on J.~J. Moreau lower and upper additions can be found 
in~\S\ref{Moreau_lower_and_upper_additions}.

\subsubsection*{The general case}

Let be given two sets $\PRIMAL$ (``primal''), $\DUAL$ (``dual''), together 
with a \emph{coupling} function
\begin{equation}
  \coupling : \PRIMAL \times \DUAL \to \barRR 
\eqfinp 
\end{equation}

With any coupling, we associate \emph{conjugacies} 
from \( \barRR^\PRIMAL \) to \( \barRR^\DUAL \) 
and from \( \barRR^\DUAL \) to \( \barRR^\PRIMAL \) 
as follows.

\begin{definition}
  The \emph{$\coupling$-Fenchel-Moreau conjugate} of a 
  function \( \fonctionprimal : \PRIMAL  \to \barRR \), 
  with respect to the coupling~$\coupling$, is
  the function \( \SFM{\fonctionprimal}{\coupling} : \DUAL  \to \barRR \) 
  defined by
  \begin{equation}
    \SFM{\fonctionprimal}{\coupling}\np{\dual} = 
    \sup_{\primal \in \PRIMAL} \Bp{ \coupling\np{\primal,\dual} 
      \LowPlus \bp{ -\fonctionprimal\np{\primal} } } 
    \eqsepv \forall \dual \in \DUAL
    \eqfinp
    \label{eq:Fenchel-Moreau_conjugate}
  \end{equation}
With the coupling $\coupling$, we associate 
the \emph{reverse coupling~$\coupling'$} defined by 
\begin{equation}
  \coupling': \DUAL \times \PRIMAL \to \barRR 
\eqsepv
\coupling'\np{\dual,\primal}= \coupling\np{\primal,\dual} 
\eqsepv
\forall \np{\dual,\primal} \in \DUAL \times \PRIMAL
\eqfinp
    \label{eq:reverse_coupling}
\end{equation}
  The \emph{$\coupling'$-Fenchel-Moreau conjugate} of a 
  function \( \fonctiondual : \DUAL \to \barRR \), 
  with respect to the coupling~$\coupling'$, is
  the function \( \SFM{\fonctiondual}{\coupling'} : \PRIMAL \to \barRR \) 
  defined by
  \begin{equation}
    \SFM{\fonctiondual}{\coupling'}\np{\primal} = 
    \sup_{ \dual \in \DUAL } \Bp{ \coupling\np{\primal,\dual} 
      \LowPlus \bp{ -\fonctiondual\np{\dual} } } 
    \eqsepv \forall \primal \in \PRIMAL 
    \eqfinp
    \label{eq:Fenchel-Moreau_reverse_conjugate}
  \end{equation}
  The \emph{$\coupling$-Fenchel-Moreau biconjugate} of a 
  function \( \fonctionprimal : \PRIMAL  \to \barRR \), 
  with respect to the coupling~$\coupling$, is
  the function \( \SFMbi{\fonctionprimal}{\coupling} : \PRIMAL \to \barRR \) 
  defined by
  \begin{equation}
    \SFMbi{\fonctionprimal}{\coupling}\np{\primal} = 
    \bp{\SFM{\fonctionprimal}{\coupling}}^{\coupling'} \np{\primal} = 
    \sup_{ \dual \in \DUAL } \Bp{ \coupling\np{\primal,\dual} 
      \LowPlus \bp{ -\SFM{\fonctionprimal}{\coupling}\np{\dual} } } 
    \eqsepv \forall \primal \in \PRIMAL 
    \eqfinp
    \label{eq:Fenchel-Moreau_biconjugate}
  \end{equation}
%
%
\end{definition}

  \begin{subequations}
For any coupling~$\coupling$, 
\begin{itemize}
\item 
the biconjugate of a 
  function \( \fonctionprimal : \PRIMAL  \to \barRR \) satisfies
\begin{equation}
  \SFMbi{\fonctionprimal}{\coupling}\np{\primal}
  \leq \fonctionprimal\np{\primal}
  \eqsepv \forall \primal \in \PRIMAL 
  \eqfinv
  \label{eq:galois-cor}
\end{equation}
\item 
for any couple of functions \( \fonctionprimal : \PRIMAL  \to \barRR \) 
and \( \fonctionprimalbis : \PRIMAL \to \barRR \), we have the inequality
\begin{equation}
     \sup_{\dual \in \DUAL} 
    \Bp{ 
\bp{-\SFM{\fonctionprimal}{\coupling}\np{\dual} } 
      \LowPlus 
\bp{-\SFM{\fonctionprimalbis}{-\coupling}\np{\dual} }
}
    \leq 
    \inf_{\primal \in \PRIMAL} 
\Bp{ 
\fonctionprimal\np{\primal} 
      \UppPlus 
\fonctionprimalbis\np{\primal} 
} 
\eqfinv
  \label{eq:dual_problem_inequality}
\end{equation}
where the \emph{$\np{-\coupling}$-Fenchel-Moreau conjugate} is given by
  \begin{equation}
    \SFM{\fonctionprimalbis}{-\coupling}\np{\dual} = 
    \sup_{\primal \in \PRIMAL} \Bp{ \bp{ -\coupling\np{\primal,\dual} }
      \LowPlus \bp{ -\fonctionprimalbis\np{\primal} } } 
    \eqsepv \forall \dual \in \DUAL
    \eqfinv
    \label{eq:minus_Fenchel-Moreau_conjugate}
  \end{equation}
\item 
for any function \( \fonctionprimal : \PRIMAL  \to \barRR \) 
and subset \( \Primal \subset \PRIMAL \), we have the inequality
\begin{equation}
      \sup_{\dual \in \DUAL} 
    \Bp{ 
\bp{-\SFM{\fonctionprimal}{\coupling}\np{\dual} } 
      \LowPlus 
\bp{-\SFM{\delta_{\Primal}}{-\coupling}\np{\dual} }
}
    \leq 
    \inf_{\primal \in \PRIMAL} 
\Bp{ 
\fonctionprimal\np{\primal} 
      \UppPlus 
\delta_{\Primal} \np{\primal} } 
= \inf_{\primal \in \Primal} \fonctionprimal\np{\primal} 
\eqfinp
  \label{eq:dual_problem_inequality_constraints}
\end{equation}
\end{itemize}
  \end{subequations}

\subsubsection*{The Fenchel conjugacy}

When the sets $\PRIMAL$ and $\DUAL$ are vector spaces 
equipped with a bilinear form \( \proscal{}{} \),
the corresponding conjugacy is the classical 
\emph{Fenchel conjugacy}.
For any functions \( \fonctionprimal : \PRIMAL  \to \barRR \)
and \( \fonctiondual : \DUAL \to \barRR \), we denote
\begin{subequations}
\begin{align}
    \LFM{\fonctionprimal}\np{\dual} 
&= 
    \sup_{\primal \in \PRIMAL} \Bp{ \proscal{\primal}{\dual} 
      \LowPlus \bp{ -\fonctionprimal\np{\primal} } } 
    \eqsepv \forall \dual \in \DUAL
    \eqfinv
    \label{eq:Fenchel_conjugate}
  \\
    \LFMr{\fonctiondual}\np{\primal} 
&= 
    \sup_{ \dual \in \DUAL } \Bp{ \proscal{\primal}{\dual} 
      \LowPlus \bp{ -\fonctiondual\np{\dual} } } 
    \eqsepv \forall \primal \in \PRIMAL 
    \label{eq:Fenchel_conjugate_reverse}
\\
    \LFMbi{\fonctionprimal}\np{\primal} 
&= 
    \sup_{\dual \in \DUAL} \Bp{ \proscal{\primal}{\dual} 
      \LowPlus \bp{ -\LFM{\fonctionprimal}\np{\dual} } } 
    \eqsepv \forall \primal \in \PRIMAL
    \eqfinp
    \label{eq:Fenchel_biconjugate}
\end{align}
\end{subequations}
Due to the presence of the coupling \( \np{-\coupling} \) 
in the Inequality~\eqref{eq:dual_problem_inequality},
we also introduce\footnote{%
In convex analysis, one does not use the notations below, but rather uses 
\( \fonctionprimal^{\lor}\np{\primal} =\fonctionprimal\np{-\primal} \), 
for all \( \primal \in \PRIMAL \), 
and
\( \fonctiondual^{\lor}\np{\dual}=\fonctiondual\np{-\dual} \), 
for all \( \dual \in \DUAL \).
The connection between both notations is given by 
\(  \minusLFM{\fonctionprimal} =
\LFM{ \bp{\fonctionprimal^{\lor}} } = 
\bp{\LFM{\fonctionprimal}}^{\lor} \).
}
\begin{subequations}
\begin{align}
    \minusLFM{\fonctionprimal}\np{\dual} 
&= 
    \sup_{\primal \in \PRIMAL} \Bp{ -\proscal{\primal}{\dual} 
      \LowPlus \bp{ -\fonctionprimal\np{\primal} } } 
= \LFM{\fonctionprimal}\np{-\dual} 
    \eqsepv \forall \dual \in \DUAL
    \eqfinv
    \label{eq:minusFenchel_conjugate}
  \\
    \minusLFMr{\fonctiondual}\np{\primal} 
&= 
    \sup_{ \dual \in \DUAL } \Bp{ -\proscal{\primal}{\dual} 
      \LowPlus \bp{ -\fonctiondual\np{\dual} } } 
= \LFMr{\fonctiondual}\np{-\primal} 
    \eqsepv \forall \primal \in \PRIMAL 
    \label{eq:minusFenchel_conjugate_reverse}
\\
    \minusLFMbi{\fonctionprimal}\np{\primal} 
&= 
    \sup_{\dual \in \DUAL} \Bp{ -\proscal{\primal}{\dual} 
      \LowPlus \bp{ -\minusLFM{\fonctionprimal}\np{\dual} } } 
=\LFMbi{\fonctionprimal}\np{\primal}
    \eqsepv \forall \primal \in \PRIMAL
    \eqfinp
    \label{eq:minusFenchel_biconjugate}
\end{align}
\end{subequations}



When the two vector spaces $\PRIMAL$ and $\DUAL$ are \emph{paired}
in the sense of convex analysis\footnote{That is,
$\PRIMAL$ and $\DUAL$ are equipped with 
a bilinear form \( \proscal{}{} \), 
and locally convex topologies
that are compatible in the sense
that the continuous linear forms on~$\PRIMAL$
are the functions 
\( \primal \in \PRIMAL \mapsto \proscal{\primal}{\dual} \),
for all \( \dual \in \DUAL \),
and 
that the continuous linear forms on~$\DUAL$
are the functions 
\( \dual \in \DUAL \mapsto \proscal{\primal}{\dual} \),
for all \( \primal \in \PRIMAL \)},
Fenchel conjugates are convex 
\emph{lower semi continuous (lsc)} functions,
and their opposites are concave
\emph{upper semi continuous (usc)} functions.

\subsection{One-sided linear couplings}
\label{One-sided_linear_couplings}

Let $\UNCERTAIN$ and $\PRIMAL$ be any two sets and
\( \theta: \UNCERTAIN \to \PRIMAL \) be a mapping.
We recall the definition \cite[p.~214]{Bauschke-Combettes:2017}
of the \emph{infimal postcomposition} 
\( \bp{\InfimalPostcomposition{\theta}{\fonctionuncertain}}:
\PRIMAL \to \barRR \) of 
a function \( \fonctionuncertain : \UNCERTAIN \to \barRR \): 
  \begin{equation}
\bp{\InfimalPostcomposition{\theta}{\fonctionuncertain}}\np{\primal}
=
\inf\defset{\fonctionuncertain\np{\uncertain}}{%
\uncertain\in\UNCERTAIN \eqsepv \theta\np{\uncertain}=\primal}
\eqsepv \forall \primal \in \PRIMAL 
  \eqfinv
\label{eq:InfimalPostcomposition}
  \end{equation}
with the convention that \( \inf \emptyset = +\infty \)
(and with the consequence that 
\( \theta: \UNCERTAIN \to \PRIMAL \) need not be defined
on all~$\UNCERTAIN$, but only on~\( \dom\fonctionuncertain \)).
The infimal postcomposition has the following 
\emph{invariance property}
\begin{equation}
  \fonctionuncertain = \fonctionprimal \circ \theta
\mtext{ where } \fonctionprimal: \PRIMAL  \to \barRR 
\Rightarrow
\InfimalPostcomposition{\theta}{\fonctionuncertain}
= \fonctionprimal \UppPlus \delta_{ \theta\np{\UNCERTAIN} }
\eqfinv 
\label{eq:InfimalPostcomposition_invariance}
\end{equation}
where $\delta_{Z}$ denotes the \emph{characteristic function} of a set~$Z$:
\begin{equation}
  \delta_{Z}\np{z} =  
   \begin{cases}
      0 &\text{ if } z \in Z \eqfinv \\ 
      +\infty &\text{ if } z \not\in Z \eqfinp
    \end{cases} 
\label{eq:characteristic_function}
\end{equation}

\begin{definition}
Let $\PRIMAL$ and $\DUAL$ be two vector spaces equipped with a bilinear
form \( \proscal{}{} \).
Let $\UNCERTAIN$ be a set and
\( \theta: \UNCERTAIN \to \PRIMAL \) a mapping.
  We define the \emph{one-sided linear coupling} $\coupling_{\theta}$
between $\UNCERTAIN$ and $\DUAL$ by
\begin{equation}
\coupling_{\theta}: \UNCERTAIN \times \DUAL \to \barRR 
\eqsepv 
\coupling_{\theta}\np{\uncertain, \dual} = 
\proscal{\theta\np{\uncertain}}{\dual} 
\eqsepv \forall \uncertain \in \UNCERTAIN
\eqsepv \forall \dual \in \DUAL
\eqfinp 
\label{eq:one-sided_linear_coupling}
\end{equation}
\end{definition}

Here are expressions for the 
conjugates and biconjugates of a function. 
\begin{proposition}
  For any function \( \fonctiondual : \DUAL \to \barRR \), 
  the $\coupling_{\theta}'$-Fenchel-Moreau conjugate is given by 
  \begin{equation}
\SFM{\fonctiondual}{\coupling_{\theta}'}=
 \LFM{ \fonctiondual } \circ \theta
\eqfinp
\label{eq:one-sided_linear_c'-Fenchel-Moreau_conjugate}
\end{equation}
  For any function \( \fonctionuncertain : \UNCERTAIN \to \barRR \), 
  the $\coupling_{\theta}$-Fenchel-Moreau conjugate is given by 
   \begin{equation}
\SFM{\fonctionuncertain}{\coupling_{\theta}}=
 \LFM{ \bp{\InfimalPostcomposition{\theta}{\fonctionuncertain}} }
\eqfinv 
\label{eq:one-sided_linear_Fenchel-Moreau_conjugate}
  \end{equation}
and the $\coupling_{\theta}$-Fenchel-Moreau biconjugate 
is given by
\begin{equation}
  \SFMbi{\fonctionuncertain}{\coupling_{\theta}}
= 
\SFM{ \bp{ \SFM{\fonctionuncertain}{\coupling_{\theta}} } }{\star} 
\circ \theta
=
 \LFMbi{ \bp{\InfimalPostcomposition{\theta}{\fonctionuncertain}} }
\circ \theta
\eqfinp
\label{eq:one-sided_linear_Fenchel-Moreau_biconjugate}
\end{equation}
For any subset \( \Uncertain \subset \UNCERTAIN \), 
the $\np{-\coupling_{\theta}}$-Fenchel-Moreau conjugate 
of the characteristic function of~$\Uncertain$ 
is given by 
\begin{equation}
\SFM{ \delta_{\Uncertain} }{-\coupling_{\theta}}
= \sigma_{ -\theta\np{\Uncertain} } 
\eqfinp
\label{eq:one-sided_linear_Fenchel-Moreau_characteristic}
\end{equation}
  \label{pr:one-sided_linear_Fenchel-Moreau_conjugate}
\end{proposition}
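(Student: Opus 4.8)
The plan is to prove the four identities essentially in the order stated, observing that \eqref{eq:one-sided_linear_Fenchel-Moreau_biconjugate} will follow by merely composing \eqref{eq:one-sided_linear_c'-Fenchel-Moreau_conjugate} and \eqref{eq:one-sided_linear_Fenchel-Moreau_conjugate}, and that \eqref{eq:one-sided_linear_Fenchel-Moreau_characteristic} is a short direct computation. Throughout, the only ingredients are the definitions \eqref{eq:Fenchel-Moreau_conjugate}--\eqref{eq:Fenchel-Moreau_biconjugate} of the conjugacies, the definition \eqref{eq:one-sided_linear_coupling} of the coupling~$\coupling_\theta$, the definition \eqref{eq:InfimalPostcomposition} of the infimal postcomposition, and the elementary calculus of J.~J.~Moreau's lower addition~$\LowPlus$ recalled in the Appendix.

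First, I would obtain \eqref{eq:one-sided_linear_c'-Fenchel-Moreau_conjugate} by pure substitution: for $\fonctiondual : \DUAL \to \barRR$ and $\uncertain \in \UNCERTAIN$, the definition \eqref{eq:Fenchel-Moreau_reverse_conjugate} together with \eqref{eq:one-sided_linear_coupling} gives $\SFM{\fonctiondual}{\coupling_\theta'}\np{\uncertain} = \sup_{\dual \in \DUAL} \Bp{ \proscal{\theta\np{\uncertain}}{\dual} \LowPlus \bp{-\fonctiondual\np{\dual}} }$, and the right-hand side is precisely $\LFM{\fonctiondual}\bp{\theta\np{\uncertain}}$ by the definition \eqref{eq:Fenchel_conjugate_reverse} of the Fenchel conjugate of a function on~$\DUAL$; since no constant is pulled through the supremum, the lower addition causes no difficulty. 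The heart of the proof is \eqref{eq:one-sided_linear_Fenchel-Moreau_conjugate}. Starting from the definition \eqref{eq:Fenchel-Moreau_conjugate}, I would write $\SFM{\fonctionuncertain}{\coupling_\theta}\np{\dual} = \sup_{\uncertain \in \UNCERTAIN} \Bp{ \proscal{\theta\np{\uncertain}}{\dual} \LowPlus \bp{-\fonctionuncertain\np{\uncertain}} }$ and reorganize the supremum by the fibers of~$\theta$, grouping the indices $\uncertain$ according to the value $\primal = \theta\np{\uncertain} \in \PRIMAL$. On each fiber the term $\proscal{\theta\np{\uncertain}}{\dual} = \proscal{\primal}{\dual}$ is constant, hence can be factored out of the inner supremum; using $-\inf_{\uncertain} \fonctionuncertain\np{\uncertain} = \sup_{\uncertain}\bp{-\fonctionuncertain\np{\uncertain}}$ together with \eqref{eq:InfimalPostcomposition}, the inner supremum over a fiber becomes $-\bp{\InfimalPostcomposition{\theta}{\fonctionuncertain}}\np{\primal}$. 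Taking the outer supremum over $\primal \in \PRIMAL$ then reproduces the definition \eqref{eq:Fenchel_conjugate} of the Fenchel conjugate $\LFM{\bp{\InfimalPostcomposition{\theta}{\fonctionuncertain}}}$, which is the announced identity.

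Next, \eqref{eq:one-sided_linear_Fenchel-Moreau_biconjugate} follows by composition: applying \eqref{eq:one-sided_linear_c'-Fenchel-Moreau_conjugate} to the function $\SFM{\fonctionuncertain}{\coupling_\theta} : \DUAL \to \barRR$ yields $\SFMbi{\fonctionuncertain}{\coupling_\theta} = \bp{\SFM{\fonctionuncertain}{\coupling_\theta}}^{\coupling_\theta'} = \LFM{\bp{\SFM{\fonctionuncertain}{\coupling_\theta}}} \circ \theta$, and substituting \eqref{eq:one-sided_linear_Fenchel-Moreau_conjugate} turns this into $\LFMbi{\bp{\InfimalPostcomposition{\theta}{\fonctionuncertain}}} \circ \theta$. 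For \eqref{eq:one-sided_linear_Fenchel-Moreau_characteristic}, I would compute directly from \eqref{eq:minus_Fenchel-Moreau_conjugate}: since $\delta_\Uncertain\np{\uncertain}$ equals $0$ on~$\Uncertain$ and $+\infty$ off~$\Uncertain$, the supremum $\sup_{\uncertain \in \UNCERTAIN} \Bp{ -\proscal{\theta\np{\uncertain}}{\dual} \LowPlus \bp{-\delta_\Uncertain\np{\uncertain}} }$ reduces to $\sup_{\uncertain \in \Uncertain} \proscal{-\theta\np{\uncertain}}{\dual} = \sup_{\primal \in -\theta\np{\Uncertain}} \proscal{\primal}{\dual}$, which is the support function $\sigma_{-\theta\np{\Uncertain}}$. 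Alternatively, observing that $-\coupling_\theta = \coupling_{-\theta}$ is itself a one-sided linear coupling, \eqref{eq:one-sided_linear_Fenchel-Moreau_characteristic} drops out of \eqref{eq:one-sided_linear_Fenchel-Moreau_conjugate} applied to~$\delta_\Uncertain$, once one notes that $\InfimalPostcomposition{\np{-\theta}}{\delta_\Uncertain} = \delta_{-\theta\np{\Uncertain}}$ and that the Fenchel conjugate of a characteristic function is the corresponding support function.

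The main obstacle, and the only place where care is genuinely needed, is the fiber reorganization in \eqref{eq:one-sided_linear_Fenchel-Moreau_conjugate}. One must justify pulling the scalar $\proscal{\primal}{\dual}$ through the inner supremum, which relies on the distributivity $a \LowPlus \sup_i b_i = \sup_i \bp{a \LowPlus b_i}$ of the lower addition over suprema for a real scalar $a$ (here $a = \proscal{\primal}{\dual} \in \RR$), and one must check that the convention $\inf \emptyset = +\infty$ is consistent with the empty fibers $\primal \notin \theta\np{\UNCERTAIN}$, where the inner supremum is taken over the empty set and hence equals $-\infty$, matching $\proscal{\primal}{\dual} \LowPlus \bp{-\infty} = -\infty$. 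These are exactly the elementary properties of the Moreau additions collected in the Appendix, so no deeper tool is required.
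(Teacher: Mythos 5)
Your proposal is correct and follows essentially the same route as the paper: direct substitution for the $\coupling_{\theta}'$-conjugate, the fiber decomposition of the supremum via \eqref{eq:lower_addition_sup} for the $\coupling_{\theta}$-conjugate, composition of the first two identities for the biconjugate, and the identification $-\coupling_{\theta}=\coupling_{\np{-\theta}}$ together with $\InfimalPostcomposition{\np{-\theta}}{\delta_{\Uncertain}}=\delta_{-\theta\np{\Uncertain}}$ for the characteristic function. Your explicit attention to the empty-fiber convention $\inf\emptyset=+\infty$ is a point the paper leaves implicit, but it matches the argument exactly.
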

We recall that, in convex analysis,
\( \sigma_{\Primal} : \DUAL \to \barRR\) denotes the 
\emph{support function of a subset~$\Primal\subset\PRIMAL$}:
\begin{equation}
  \sigma_{\Primal}\np{\dual} = 
\sup_{\primal\in\Primal} \proscal{\primal}{\dual}
\eqsepv \forall \dual \in \DUAL
\eqfinp
\label{eq:support_function}
\end{equation}

\begin{proof}
We prove~\eqref{eq:one-sided_linear_c'-Fenchel-Moreau_conjugate}.
Letting \( \uncertain \in \UNCERTAIN  \), we have that 
  \begin{align*}
\SFM{ \bp{ \fonctiondual } }{\coupling_{\theta}'}\np{\uncertain}
    &=
       \sup_{\dual \in \DUAL} 
\Bp{ \proscal{\theta\np{\uncertain}}{\dual}
      \LowPlus \bp{ -
 \fonctiondual\np{\dual} } }
\tag{by the conjugate formula~\eqref{eq:Fenchel-Moreau_conjugate} 
and the coupling~\eqref{eq:one-sided_linear_coupling}}
\\
    &=
 \LFM{ \fonctiondual }\bp{\theta\np{\uncertain} } 
\tag{by the expression~\eqref{eq:Fenchel_conjugate} of the Fenchel conjugate}
\eqfinp
  \end{align*}

We prove~\eqref{eq:one-sided_linear_Fenchel-Moreau_conjugate}.
Letting \( \dual \in \DUAL \), we have that 
  \begin{align*}
    \SFM{\fonctionuncertain}{\coupling_{\theta}}\np{\dual} 
    &=
       \sup_{\uncertain \in \PRIMAL }
\Bp{ \proscal{\theta\np{\uncertain}}{\dual}
      \LowPlus \bp{ -\fonctionuncertain\np{\uncertain} } }
\tag{by the conjugate formula~\eqref{eq:Fenchel-Moreau_conjugate} 
and the coupling~\eqref{eq:one-sided_linear_coupling}}
    \\
    &=
       \sup_{\primal \in \PRIMAL }
       \sup_{\uncertain \in \PRIMAL, \theta\np{\uncertain}=\primal}
\Bp{ \proscal{\theta\np{\uncertain}}{\dual}
      \LowPlus \bp{ -\fonctionuncertain\np{\uncertain} } }
    \\
    &=
       \sup_{\primal \in \PRIMAL }
\Bp{ \proscal{\primal}{\dual}
      \LowPlus        \sup_{\uncertain \in \PRIMAL, \theta\np{\uncertain}=\primal}
\bp{ -\fonctionuncertain\np{\uncertain} } }
\tag{by~\eqref{eq:lower_addition_sup}}
    \\
    &=
       \sup_{\primal \in \PRIMAL }
\Bp{ \proscal{\primal}{\dual}
      \LowPlus  \bp{ - \inf_{\uncertain \in \PRIMAL, \theta\np{\uncertain}=\primal}
\fonctionuncertain\np{\uncertain} } }
    \\
    &=
       \sup_{\primal \in \PRIMAL }
\bgp{ \proscal{\primal}{\dual}
      \LowPlus  \Bp{ - 
\bp{\InfimalPostcomposition{\theta}{\fonctionuncertain}}\np{\primal} } }
\tag{by the infimal postcomposition 
expression~\eqref{eq:InfimalPostcomposition}}
    \\
    &=
 \LFM{ \bp{\InfimalPostcomposition{\theta}{\fonctionuncertain}} }\np{\dual}
\tag{by the expression~\eqref{eq:Fenchel_conjugate} of the Fenchel conjugate}
  \end{align*}

We prove~\eqref{eq:one-sided_linear_Fenchel-Moreau_biconjugate}.
Letting \( \primal \in \PRIMAL  \), we have that 
  \begin{align*}
    \SFMbi{\fonctionuncertain}{\coupling_{\theta}}\np{\primal} 
    &=
\SFM{ \bp{ \SFM{\fonctionuncertain}{\coupling_{\theta}} } }{\coupling_{\theta}'}\np{\primal}
\tag{by the definition~\eqref{eq:Fenchel-Moreau_biconjugate}
of the biconjugate}
\\
    &=
\SFM{ \bp{ \LFM{ \bp{\InfimalPostcomposition{\theta}{\fonctionuncertain}} } } }%
{\coupling_{\theta}'}\np{\primal}
\tag{by~\eqref{eq:one-sided_linear_Fenchel-Moreau_conjugate}}
\\
    &=
       \sup_{\dual \in \DUAL} 
\Bp{ \proscal{\theta\np{\primal}}{\dual}
      \LowPlus \bp{ -
\LFM{ \bp{\InfimalPostcomposition{\theta}{\fonctionuncertain}} }\np{\dual} } }
\tag{by the conjugate formula~\eqref{eq:Fenchel-Moreau_conjugate} 
and the coupling~\eqref{eq:one-sided_linear_coupling}}
\\
    &=
 \LFMbi{ \bp{\InfimalPostcomposition{\theta}{\fonctionuncertain}} }
\bp{\theta\np{\primal} } 
\tag{by the expression~\eqref{eq:Fenchel_conjugate} of the Fenchel conjugate}
  \end{align*}

We prove~\eqref{eq:one-sided_linear_Fenchel-Moreau_characteristic}:
\begin{align*}
  \SFM{ \delta_{\Uncertain} }{-\coupling_{\theta}}
&= 
\SFM{ \delta_{\Uncertain} }{\coupling_{\np{-\theta}}} 
\tag{ because \( -\coupling_{\theta}=\coupling_{\np{-\theta}} \)
by~\eqref{eq:one-sided_linear_coupling} }
\\
&= 
 \LFM{ \bp{\InfimalPostcomposition{\np{-\theta}}{ \delta_{\Uncertain} }} }
\tag{ by~\eqref{eq:one-sided_linear_Fenchel-Moreau_conjugate} }
\\
&= 
 \LFM{ \delta_{ -\theta\np{\Uncertain} } }
\tag{ because \( \InfimalPostcomposition{\theta}{\delta_{\Uncertain}} =
\delta_{ \theta\np{\Uncertain} } \) by~\eqref{eq:InfimalPostcomposition} }
\\
&= 
 \sigma_{ -\theta\np{\Uncertain} } 
\tag{ as is well-known in convex analysis }
\eqfinp
\end{align*}
This ends the proof.
\end{proof}

\subsection{Constant along primal rays coupling (\Capra)}
\label{Constant_along_primal_rays_coupling}

Now, we introduce a novel coupling, which is a special case of
one-sided linear couplings.

\begin{definition}
Let $\PRIMAL$ and $\DUAL$ be two vector spaces 
equipped with a bilinear form \( \proscal{}{} \), 
and suppose that $\PRIMAL$ is equipped with a norm~$\triplenorm{\cdot}$.
  We define the \emph{\Capra\ coupling}~$\couplingCAPRA$ 
between $\PRIMAL$ and $\DUAL$ by
\begin{equation}
\forall \dual \in \DUAL \eqsepv 
\begin{cases}
  \couplingCAPRA\np{\primal, \dual} 
&= \displaystyle
\frac{ \proscal{\primal}{\dual} }{ \triplenorm{\primal} }
\eqsepv \forall \primal \in \PRIMAL\backslash\{0\}
\\[4mm]
\couplingCAPRA\np{0, \dual} &= 0.
\end{cases}
\label{eq:coupling_CAPRAC_original}
\end{equation}
\end{definition}
We stress the point that, in~\eqref{eq:coupling_CAPRAC_original},
the bilinear form term \( \proscal{\primal}{\dual} \)
and the norm term \( \triplenorm{\primal} \) need not be related.
Indeed, the bilinear form \( \proscal{}{} \) is not necessarily a scalar
product and the norm~$\triplenorm{\cdot}$ is not necessarily induced by this latter.

The \Capra\ coupling has the property of being 
\emph{constant along primal rays}, hence the acronym~\Capra.
We introduce the unit sphere~$\SPHERE_{\triplenorm{\cdot}}$
of the normed space~$\bp{\PRIMAL,\triplenorm{\cdot}}$, 
and the primal \emph{normalization mapping}~$\normalized$
\begin{equation}
\SPHERE_{\triplenorm{\cdot}}= 
\defset{\primal \in \PRIMAL}{\triplenorm{\primal} = 1} 
\mtext{ and }
\normalized : \PRIMAL \to \SPHERE_{\triplenorm{\cdot}} \cup \{0\} 
\eqsepv
\normalized\np{\primal}=
\begin{cases}
\frac{ \primal }{ \triplenorm{\primal} }
& \mtext{ if } \primal \neq 0 \eqfinv 
\\
0 
& \mtext{ if } \primal = 0 \eqfinp
\end{cases}  
\label{eq:normalization_mapping}
\end{equation}
We immedialy obtain that, 
for all subset \( D \subset \RR^d \) that contains zero (\( 0 \in D \)):
\begin{equation}
\normalized^{-1}\np{D} 
= \normalized^{-1}\bp{ \np{\{0\} \cup \SPHERE_{\triplenorm{\cdot}}} \cap D } 
= \{0\} \cup \normalized^{-1}\np{\SPHERE_{\triplenorm{\cdot}} \cap D } 
\eqfinp
\label{eq:pre-image_normalization_mapping}
\end{equation}

With these notations, the  \Capra\ 
coupling~\eqref{eq:coupling_CAPRAC_original} is a special case
of one-sided linear coupling $\coupling_{\normalized}$, 
as in~\eqref{eq:one-sided_linear_coupling} with \(\theta=\normalized\),
the Fenchel coupling after primal normalization:
\begin{equation}
\couplingCAPRA\np{\primal, \dual} = 
\coupling_{\normalized}\np{\primal, \dual} = 
\proscal{\normalized\np{\primal}}{\dual} 
\eqsepv \forall \primal \in \PRIMAL
\eqsepv \forall \dual \in \DUAL 
\eqfinp
\label{eq:coupling_CAPRAC}
\end{equation}

Here are expressions for the  \Capra-conjugates and biconjugates
of a function. 
The following Proposition simply is
Proposition~\ref{pr:one-sided_linear_Fenchel-Moreau_conjugate}
in the case where the mapping \(\theta \) is the
normalization mapping~\(\normalized\) in~\eqref{eq:normalization_mapping}. 

\begin{proposition}
Let $\PRIMAL$ and $\DUAL$ be two vector spaces 
equipped with a bilinear form \( \proscal{}{} \), 
and suppose that $\PRIMAL$ is equipped with a norm~$\triplenorm{\cdot}$.

  For any function \( \fonctiondual : \DUAL \to \barRR \), 
  the $\couplingCAPRA'$-Fenchel-Moreau conjugate is given by 
  \begin{equation}
\SFM{\fonctiondual}{\couplingCAPRA'}=
 \LFM{ \fonctiondual } \circ \normalized
\eqfinp 
\label{eq:CAPRA'_Fenchel-Moreau_conjugate}
\end{equation}
  For any function \( \fonctionprimal : \PRIMAL \to \barRR \), 
  the $\couplingCAPRA$-Fenchel-Moreau conjugate is given by 
   \begin{equation}
\SFM{\fonctionprimal}{\couplingCAPRA}=
 \LFM{ \bp{\InfimalPostcomposition{\normalized}{\fonctionprimal}} }
\eqfinv 
\label{eq:CAPRA_Fenchel-Moreau_conjugate}
  \end{equation}
where the infimal postcomposition~\eqref{eq:InfimalPostcomposition}
has the expression
  \begin{equation}
\bp{\InfimalPostcomposition{\normalized}{\fonctionprimal}}\np{\primal}
=
\inf\defset{\fonctionprimal\np{\primal'}}{
\normalized\np{\primal'}=\primal}
=
\begin{cases}
  \inf_{\lambda > 0} \fonctionprimal\np{\lambda\primal}
& \text{if } \primal \in \SPHERE_{\triplenorm{\cdot}}  \cup \{0\} 
\\
+\infty  
& \text{if } \primal \not\in \SPHERE_{\triplenorm{\cdot}}  \cup \{0\} 
\end{cases}
\label{eq:CAPRA_InfimalPostcomposition}
  \end{equation}
and the $\couplingCAPRA$-Fenchel-Moreau biconjugate 
is given by
\begin{equation}
  \SFMbi{\fonctionprimal}{\couplingCAPRA}
= 
\SFM{ \bp{ \SFM{\fonctionprimal}{\couplingCAPRA} } }{\star} 
\circ \normalized
=
 \LFMbi{ \bp{\InfimalPostcomposition{\normalized}{\fonctionprimal}} }
\circ \normalized
\eqfinp
\label{eq:CAPRA_Fenchel-Moreau_biconjugate}
\end{equation}
  \label{pr:CAPRA_Fenchel-Moreau_conjugate}
\end{proposition}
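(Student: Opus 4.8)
The plan is to read this Proposition off from Proposition~\ref{pr:one-sided_linear_Fenchel-Moreau_conjugate} by specialization, since by~\eqref{eq:coupling_CAPRAC} the coupling~$\couplingCAPRA$ is precisely the one-sided linear coupling $\coupling_{\normalized}$ attached to the normalization mapping~$\normalized$ of~\eqref{eq:normalization_mapping}, taking $\UNCERTAIN = \PRIMAL$ and $\theta = \normalized$. Under this identification, the three identities~\eqref{eq:CAPRA'_Fenchel-Moreau_conjugate}, \eqref{eq:CAPRA_Fenchel-Moreau_conjugate} and~\eqref{eq:CAPRA_Fenchel-Moreau_biconjugate} are exactly the instances~\eqref{eq:one-sided_linear_c'-Fenchel-Moreau_conjugate}, \eqref{eq:one-sided_linear_Fenchel-Moreau_conjugate} and~\eqref{eq:one-sided_linear_Fenchel-Moreau_biconjugate}, so no fresh computation is required for them.

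The only substantive point to establish is the closed form~\eqref{eq:CAPRA_InfimalPostcomposition} for the infimal postcomposition $\bp{\InfimalPostcomposition{\normalized}{\fonctionprimal}}$, which I would derive straight from its definition~\eqref{eq:InfimalPostcomposition} by describing the fibers of~$\normalized$. First I would note that $\normalized$ maps $\PRIMAL$ onto $\SPHERE_{\triplenorm{\cdot}} \cup \{0\}$, so that for $\primal \notin \SPHERE_{\triplenorm{\cdot}} \cup \{0\}$ the set $\defset{\primal'}{\normalized\np{\primal'} = \primal}$ is empty and the convention $\inf\emptyset = +\infty$ gives the value $+\infty$. For $\primal \in \SPHERE_{\triplenorm{\cdot}}$, I would solve $\normalized\np{\primal'} = \primal$ explicitly: since $\triplenorm{\primal} = 1$, the relation $\primal'/\triplenorm{\primal'} = \primal$ holds if and only if $\primal' = \lambda\primal$ for some $\lambda > 0$ (take $\lambda = \triplenorm{\primal'}$ in one direction, and check $\normalized\np{\lambda\primal} = \primal$ for all $\lambda > 0$ in the other). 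The fiber is thus the open ray $\defset{\lambda\primal}{\lambda > 0}$, whence the infimum equals $\inf_{\lambda > 0} \fonctionprimal\np{\lambda\primal}$.

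It remains to handle the origin and to check consistency of the formula. For $\primal = 0$, the equation $\normalized\np{\primal'} = 0$ forces $\primal' = 0$, because any $\primal' \neq 0$ satisfies $\triplenorm{\normalized\np{\primal'}} = 1$; hence the fiber is $\{0\}$ and the infimum equals $\fonctionprimal\np{0}$, which coincides with $\inf_{\lambda > 0} \fonctionprimal\np{\lambda\cdot 0}$, so the single expression $\inf_{\lambda > 0} \fonctionprimal\np{\lambda\primal}$ already absorbs the case $\primal = 0$ together with the spherical case. Collecting the three regimes yields~\eqref{eq:CAPRA_InfimalPostcomposition}, and combining with the inherited identities completes the argument.

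I do not expect a genuine obstacle: the result is a routine specialization of the previously proved Proposition, and the only place demanding a little care is the bookkeeping of the three regimes for~$\primal$, namely $\primal \notin \SPHERE_{\triplenorm{\cdot}} \cup \{0\}$, $\primal \in \SPHERE_{\triplenorm{\cdot}}$, and $\primal = 0$, together with the harmless verification that the ray formula absorbs the origin.
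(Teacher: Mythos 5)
Your proposal is correct and matches the paper's treatment: the paper itself presents this Proposition as nothing more than Proposition~\ref{pr:one-sided_linear_Fenchel-Moreau_conjugate} specialized to $\theta=\normalized$, and your fiber analysis of the normalization mapping (empty fiber off $\SPHERE_{\triplenorm{\cdot}}\cup\{0\}$, the open ray $\{\lambda\primal\}_{\lambda>0}$ on the sphere, and the singleton $\{0\}$ at the origin) is exactly the verification needed for~\eqref{eq:CAPRA_InfimalPostcomposition}. No gaps.
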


We recall that so-called  \Capra\ $\couplingCAPRA$-convex functions 
are all functions \( \fonctionprimal : \PRIMAL \to \barRR \) of the form 
$\SFM{ \bp{ \fonctiondual } }{\couplingCAPRA'}$, 
for any \( \fonctiondual \in \barRR^\DUAL \), 
or, equivalently,
all functions of the form $\SFMbi{\fonctionprimal}{\couplingCAPRA}$, 
for any \( \fonctionprimal \in \barRR^\PRIMAL \), 
or, equivalently,
all functions that are equal to their $\couplingCAPRA$-biconjugate
(\( \SFMbi{\fonctionprimal}{\couplingCAPRA}=\fonctionprimal \))
\cite{Singer:1997,Rubinov:2000,Martinez-Legaz:2005}.
From the expressions~\eqref{eq:CAPRA'_Fenchel-Moreau_conjugate},
\eqref{eq:CAPRA_Fenchel-Moreau_conjugate}
and \eqref{eq:CAPRA_Fenchel-Moreau_biconjugate}, we easily deduce the 
following result.

\begin{corollary}
When $\PRIMAL$ and $\DUAL$ are two paired vector spaces,
and $\PRIMAL$ is equipped with a norm~$\triplenorm{\cdot}$,
the $\couplingCAPRA$-Fenchel-Moreau conjugate 
\( \SFM{\fonctionprimal}{\couplingCAPRA} \)
  is a convex lower semi continuous (lsc) function on~$\DUAL$.
In addition, using~\eqref{eq:CAPRA'_Fenchel-Moreau_conjugate}, 
a function is $\couplingCAPRA$-convex 
if and only if it is the composition of 
a convex lower semi continuous function on~$\PRIMAL$
with the normalization mapping~\eqref{eq:normalization_mapping}.
\end{corollary}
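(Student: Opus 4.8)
The plan is to read off both assertions directly from the three conjugacy formulas of Proposition~\ref{pr:CAPRA_Fenchel-Moreau_conjugate}, reducing everything to the fact, recalled for paired spaces in~\S\ref{Background_on_Fenchel-Moreau_conjugacies}, that Fenchel conjugates are exactly the convex lsc functions.

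For the convexity and lower semicontinuity of $\SFM{\fonctionprimal}{\couplingCAPRA}$, I would invoke formula~\eqref{eq:CAPRA_Fenchel-Moreau_conjugate}, which expresses it as $\LFM{\bp{\InfimalPostcomposition{\normalized}{\fonctionprimal}}}$, that is, as the ordinary Fenchel conjugate on~$\DUAL$ of the function $\InfimalPostcomposition{\normalized}{\fonctionprimal}$ defined on~$\PRIMAL$. Since $\PRIMAL$ and $\DUAL$ are paired, such a Fenchel conjugate is a convex lsc function on~$\DUAL$. This settles the first claim with no computation.

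For the characterization of $\couplingCAPRA$-convex functions I would prove both implications. For the forward one, recall that a $\couplingCAPRA$-convex function is, by definition, of the form $\SFM{\fonctiondual}{\couplingCAPRA'}$ for some $\fonctiondual \in \barRR^\DUAL$; formula~\eqref{eq:CAPRA'_Fenchel-Moreau_conjugate} rewrites it as $\LFM{\fonctiondual}\circ\normalized$. As $\LFM{\fonctiondual}$ is a Fenchel conjugate on the paired space~$\PRIMAL$, it is convex lsc, so the function is indeed the composition of a convex lsc function on~$\PRIMAL$ with the normalization mapping~\eqref{eq:normalization_mapping}. For the converse, I would start from $\fonctionprimal = g\circ\normalized$ with $g$ convex lsc on~$\PRIMAL$ and apply the Fenchel--Moreau biconjugation theorem for paired spaces: $g$ equals its own Fenchel biconjugate $\LFMbi{g}$, hence $g=\LFM{\fonctiondual}$ with $\fonctiondual=\LFM{g}$. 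Substituting back and using~\eqref{eq:CAPRA'_Fenchel-Moreau_conjugate} gives $\fonctionprimal=\LFM{\fonctiondual}\circ\normalized=\SFM{\fonctiondual}{\couplingCAPRA'}$, which is $\couplingCAPRA$-convex.

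The only delicate point is this converse step, where I rely on a convex lsc function being representable as a Fenchel (bi)conjugate. This is precisely the Fenchel--Moreau theorem in the paired setting, subject to the usual caveat about improper functions: the identity $g=\LFMbi{g}$ holds for proper convex lsc functions and for the two constant functions $+\infty$ and $-\infty$, but an improper convex lsc function taking the value $-\infty$ need not equal its biconjugate. I would therefore read ``convex lsc'' in the statement in the Fenchel--Moreau sense, i.e.\ as ``equal to its Fenchel biconjugate''; with that reading the equivalence is exact, and all remaining steps are formal substitutions into the formulas of Proposition~\ref{pr:CAPRA_Fenchel-Moreau_conjugate}.
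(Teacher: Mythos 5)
Your argument is correct and is exactly the deduction the paper has in mind: the paper offers no written proof, merely asserting that the corollary follows ``easily'' from \eqref{eq:CAPRA'_Fenchel-Moreau_conjugate}, \eqref{eq:CAPRA_Fenchel-Moreau_conjugate} and \eqref{eq:CAPRA_Fenchel-Moreau_biconjugate}, and you have simply written that deduction out. Your caveat about improper functions --- that the converse direction needs ``convex lsc'' read in the Fenchel--Moreau sense, i.e.\ as membership in $\Gamma(\PRIMAL)$, since a convex lsc function taking the value $-\infty$ on a proper nonempty closed convex set is not its own biconjugate --- is a genuine precision that the paper's one-line justification glosses over.
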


\section{ \Capra\ conjugates and biconjugates related to the \pseudonormlzero}
\label{CAPRAC_conjugates_and_biconjugates_related_to_the_pseudo_norm}

In this Section, we work on the Euclidian space~$\RR^d$
(with $d \in \NN^*$), equipped with the scalar product 
\( \proscal{\cdot}{\cdot} \)
and with the Euclidian norm
\( \norm{\cdot} = \sqrt{ \proscal{\cdot}{\cdot} } \).
In particular, we consider the \emph{Euclidian unit sphere}
\begin{equation}
  \SPHERE= \defset{\primal \in \PRIMAL}{\norm{\primal} = 1} 
\eqfinv
\label{eq:Euclidian_SPHERE}
\end{equation}
and the \emph{(Euclidian) coupling \Capra}~$\couplingCAPRA$ 
between $\RR^d$ and $\RR^d$ by
\begin{equation}
\forall \dual \in \RR^d \eqsepv 
\begin{cases}
  \couplingCAPRA\np{\primal, \dual} 
&= \displaystyle
\frac{ \proscal{\primal}{\dual} }{ \norm{\primal} }
\eqsepv \forall \primal \in \RR^d\backslash\{0\} \eqfinv
\\[4mm]
\couplingCAPRA\np{0, \dual} &= 0.
\end{cases}
\label{eq:Euclidian_coupling_CAPRAC}
\end{equation}

The so-called \emph{\pseudonormlzero} is the function
\( \lzero : \RR^d \to \ba{0,1,\ldots,d} \)
defined, for any \( \primal \in \RR^d \), by
\begin{equation}
\lzero\np{\primal} =
\module{\primal}_{0} = \textrm{number of nonzero components of } \primal
\eqfinp
\label{eq:pseudo_norm_l0}  
\end{equation}
The \pseudonormlzero\ displays the invariance property
\begin{equation}
\lzero \circ \normalized = \lzero   
\label{eq:pseudonormlzero_invariance_normalized}
\end{equation}
with respect to the normalization mapping~\eqref{eq:normalization_mapping}.
This property will be instrumental to 
show that the \pseudonormlzero\ 
is a  \Capra\ $\couplingCAPRA$-convex function.
For this purpose, we will start by introducing two dual norms.
\medskip


For any \( \primal \in \RR^d \) and \( K \subset \ba{1,\ldots,d} \), 
we denote by
\( \primal_K \in \RR^d \) the vector which coincides with~\( \primal \),
except for the components outside of~$K$ that vanish:
\( \primal_K \) is the orthogonal projection of~\( \primal \) onto
the subspace \( \RR^K \times \{0\}^{-K} \subset \RR^d \).
Here, following notation from Game Theory, 
we have denoted by $-K$ the complementary subset 
of~$K$ in \( \ba{1,\ldots,d} \): \( K \cup (-K) = \ba{1,\ldots,d} \)
and \( K \cap (-K) = \emptyset \). 
In what follows, $\cardinal{K}$ denotes the cardinal of the set~$K$
and the notation \( \sup_{\cardinal{K} \leq k} \) 
is a shorthand for \( \sup_{ { K \subset \na{1,\ldots,d}, \cardinal{K} \leq k}} \)
(the same holds for \( \sup_{\cardinal{K} = k} \)).

\begin{definition}
  Let \( \primal \in \RR^d \).
  For \( k \in \ba{1,\ldots,d} \), 
we denote by \( \SymmetricGaugeNorm{k}{\primal} \)
  the maximum of \( \norm{\primal_K} \) over all subsets \( K \subset
  \ba{1,\ldots,d} \) with cardinal (less than)~$k$:
  \begin{equation}
\SymmetricGaugeNorm{k}{\primal}
=
\sup_{\cardinal{K} \leq k}\norm{\primal_K} 
=
\sup_{\cardinal{K} = k}\norm{\primal_K} 
    \eqfinp
    \label{eq:symmetric_gauge_norm}
  \end{equation}
  Thus defined, \( \SymmetricGaugeNorm{k}{\cdot} \) is a norm,
the \emph{$2$-$k$-symmetric gauge norm},
or \emph{Ky Fan vector norm}.
Its dual norm (see Definition~\ref{de:dual_norm}) 
is called \emph{$k$-support norm}
\cite{Argyriou-Foygel-Srebro:2012}, denoted by
\( \SupportNorm{k}{\cdot} \):
\begin{equation}
  \SupportNorm{k}{\cdot} = \bp{ \SymmetricGaugeNorm{k}{\cdot} }_{\star}
\eqfinp
    \label{eq:support_norm}
\end{equation}
    \label{de:symmetric_gauge_norm}
\end{definition}
The property that \( \sup_{\cardinal{K} \leq k}\norm{\primal_K} 
= \sup_{\cardinal{K} = k}\norm{\primal_K} \) 
in~\eqref{eq:symmetric_gauge_norm} 
comes from the easy observation that 
\( K\subset K' \Rightarrow \norm{\primal_K} \leq \norm{\primal_{K'}} \).
\bigskip


The \pseudonormlzero\ is used in exact sparse optimization problems of the form
\( \inf_{ \module{\primal}_{0} \leq k } \fonctionprimal\np{\primal} \).
This is why we introduce the \emph{level sets}
\begin{subequations}
  \begin{align}
  \LevelSet{\lzero}{k} 
&= 
\defset{ \primal \in \RR^d }{ \lzero\np{\primal} \leq k }
\eqsepv \forall k \in \ba{0,1,\ldots,d} 
\eqfinv
\label{eq:level_set_pseudonormlzero}
\intertext{ and the \emph{level curves}}
  \LevelCurve{\lzero}{k} 
&= 
\defset{ \primal \in \RR^d }{ \lzero\np{\primal} = k }
\eqsepv \forall k \in \ba{0,1,\ldots,d} 
\eqfinp
\label{eq:level_curve_pseudonormlzero}
  \end{align}
\end{subequations}

  The \pseudonormlzero\ in~\eqref{eq:pseudo_norm_l0}, 
the characteristic functions \( \delta_{ \LevelSet{\lzero}{k} } \) 
of its level sets
and the symmetric gauge norms in~\eqref{eq:symmetric_gauge_norm}
are related by the following conjugate formulas.
The proof relies on results gathered in the Appendix~\ref{Appendix}.

\begin{theorem}
Let~$\couplingCAPRA$ be the Euclidian coupling 
\Capra~\eqref{eq:Euclidian_coupling_CAPRAC}.
Let \( k \in \ba{0,1,\ldots,d} \). We have that:
  \begin{subequations}
    \begin{align}
      \SFM{ \delta_{ \LevelSet{\lzero}{k} } }{-\couplingCAPRA}
=       \SFM{ \delta_{ \LevelSet{\lzero}{k} } }{\couplingCAPRA}
      &=
        \SymmetricGaugeNorm{k}{\cdot} 
\eqfinv
        \label{eq:conjugate_delta_l0norm}
      \\ 
      \SFMbi{ \delta_{ \LevelSet{\lzero}{k} } }{\couplingCAPRA} 
      &=
        \delta_{ \LevelSet{\lzero}{k} } 
\eqfinv
        \label{eq:biconjugate_delta_l0norm}
      \\
      \SFM{ \lzero }{\couplingCAPRA} 
      &=
\sup_{l=0,1,\ldots,d} \Bc{ \SymmetricGaugeNorm{l}{\cdot} -l }  
\eqfinv
        \label{eq:conjugate_l0norm}
      \\
      \SFMbi{ \lzero }{\couplingCAPRA} 
      &=\lzero 
        \label{eq:biconjugate_l0norm}
        \eqfinv
    \end{align}
  \end{subequations}
with the convention, in~\eqref{eq:conjugate_delta_l0norm}
and in~\eqref{eq:conjugate_l0norm}, 
that \( \SymmetricGaugeNorm{0}{\cdot} = 0 \).
\end{theorem}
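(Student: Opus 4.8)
The plan is to derive the two conjugate identities~\eqref{eq:conjugate_delta_l0norm} and~\eqref{eq:conjugate_l0norm} directly from the definition~\eqref{eq:Fenchel-Moreau_conjugate}, and then to obtain the biconjugates~\eqref{eq:biconjugate_delta_l0norm} and~\eqref{eq:biconjugate_l0norm} by feeding these expressions back in. For~\eqref{eq:conjugate_delta_l0norm}, the indicator restricts the supremum to the level set, so that $\SFM{\delta_{\LevelSet{\lzero}{k}}}{\couplingCAPRA}\np{\dual}=\sup_{\primal\in\LevelSet{\lzero}{k}}\couplingCAPRA\np{\primal,\dual}$. I would split this supremum according to the support $K$ of $\primal$, with $\cardinal{K}\leq k$; since $\proscal{\primal}{\dual}=\proscal{\primal}{\dual_K}$ for $\primal$ supported on $K$, Cauchy--Schwarz gives $\sup \proscal{\primal}{\dual}/\norm{\primal}=\norm{\dual_K}$ (attained at $\primal=\dual_K$), and taking the supremum over $\cardinal{K}\leq k$ yields $\SymmetricGaugeNorm{k}{\dual}$ by~\eqref{eq:symmetric_gauge_norm}; the case $k=0$ matches the convention $\SymmetricGaugeNorm{0}{\cdot}=0$. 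The equality of the $\np{-\couplingCAPRA}$- and $\couplingCAPRA$-conjugates then follows from the symmetry of $\LevelSet{\lzero}{k}$ together with $\couplingCAPRA\np{-\primal,\dual}=-\couplingCAPRA\np{\primal,\dual}$ and the substitution $\primal\mapsto-\primal$.

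For~\eqref{eq:conjugate_l0norm}, I would use the decomposition $\lzero=\inf_{l=0,\ldots,d}\bp{l+\delta_{\LevelSet{\lzero}{l}}}$, valid because the least $l$ with $\primal\in\LevelSet{\lzero}{l}$ is exactly $\lzero\np{\primal}$; the conjugate turns the infimum into a supremum and sends the additive constant $l$ to $-l$, so~\eqref{eq:conjugate_delta_l0norm} gives $\SFM{\lzero}{\couplingCAPRA}=\sup_{l}\bc{\SymmetricGaugeNorm{l}{\cdot}-l}$. For~\eqref{eq:biconjugate_delta_l0norm}, inserting~\eqref{eq:conjugate_delta_l0norm} into the biconjugate gives, for $\primal\neq0$, $\SFMbi{\delta_{\LevelSet{\lzero}{k}}}{\couplingCAPRA}\np{\primal}=\sup_{\dual}\bc{\proscal{\normalized\np{\primal}}{\dual}-\SymmetricGaugeNorm{k}{\dual}}=\LFM{\bp{\SymmetricGaugeNorm{k}{\cdot}}}\bp{\normalized\np{\primal}}$. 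As the Fenchel conjugate of a norm is the indicator of the dual unit ball, this equals $\delta_{\{\SupportNorm{k}{\cdot}\leq1\}}\bp{\normalized\np{\primal}}$ by~\eqref{eq:support_norm}; it then remains to check that, for $\norm{\bar\primal}=1$, one has $\SupportNorm{k}{\bar\primal}\leq1$ iff $\lzero\np{\bar\primal}\leq k$. The ``if'' direction is Cauchy--Schwarz on the support of $\bar\primal$, and the ``only if'' follows by testing against $\dual=\bar\primal/\SymmetricGaugeNorm{k}{\bar\primal}$ and using the strict inequality $\SymmetricGaugeNorm{k}{\bar\primal}<\norm{\bar\primal}$ that holds when $\lzero\np{\bar\primal}>k$. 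The invariance $\lzero\circ\normalized=\lzero$ from~\eqref{eq:pseudonormlzero_invariance_normalized} (and treating $\primal=0$ directly) then yields~\eqref{eq:biconjugate_delta_l0norm}.

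The main identity~\eqref{eq:biconjugate_l0norm} has the easy inequality $\SFMbi{\lzero}{\couplingCAPRA}\leq\lzero$ from~\eqref{eq:galois-cor}, so the task is the reverse inequality. Since the biconjugate is constant along primal rays (it factors through $\normalized$) and so is $\lzero$, it suffices to prove $\SFMbi{\lzero}{\couplingCAPRA}\np{\bar\primal}\geq\lzero\np{\bar\primal}$ for $\bar\primal\in\SPHERE$, the origin being handled directly. Writing $m=\lzero\np{\bar\primal}$ and using~\eqref{eq:conjugate_l0norm}, I would produce the dual certificate $\dual=t\,\bar\primal$ and observe that the desired bound $\proscal{\bar\primal}{\dual}-\sup_{l}\bp{\SymmetricGaugeNorm{l}{\dual}-l}\geq m$ amounts to $t\bp{1-\SymmetricGaugeNorm{l}{\bar\primal}}\geq m-l$ for every $l$. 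This is automatic for $l\geq m$, where $\SymmetricGaugeNorm{l}{\bar\primal}=\norm{\bar\primal}=1$, and for $l<m$ it holds once $t\geq\max_{l<m}\frac{m-l}{1-\SymmetricGaugeNorm{l}{\bar\primal}}$, a finite quantity because $\SymmetricGaugeNorm{l}{\bar\primal}<1$ strictly for $l<m$. Hence $\SFMbi{\lzero}{\couplingCAPRA}\np{\bar\primal}\geq m$, which completes~\eqref{eq:biconjugate_l0norm}.

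The main obstacle I expect is this last step: one must exhibit an explicit dual point certifying that the convex biconjugate recovers the full value $\lzero\np{\bar\primal}$ on the sphere, and the decisive quantitative input is the strict gap $\SymmetricGaugeNorm{l}{\bar\primal}<\norm{\bar\primal}$ for $l<\lzero\np{\bar\primal}$, which keeps the scaling factor $t$ finite. The remaining parts are essentially bookkeeping around Cauchy--Schwarz and the duality~\eqref{eq:support_norm} between the $2$-$k$-symmetric gauge norm and the $k$-support norm.
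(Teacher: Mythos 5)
Your proposal is correct, and its theorem-level architecture matches the paper's: indicator conjugate equals the $2$-$k$-symmetric gauge norm, an infimal decomposition of $\lzero$ for~\eqref{eq:conjugate_l0norm}, the dual-ball computation for~\eqref{eq:biconjugate_delta_l0norm}, and the easy inequality~\eqref{eq:galois-cor} plus a dual certificate on the ray $\RR_{+}\primal$ for~\eqref{eq:biconjugate_l0norm}. The differences lie in how you discharge the supporting lemmas, and they are genuine. First, you compute $\SFM{ \delta_{ \LevelSet{\lzero}{k} } }{\couplingCAPRA}$ directly by Cauchy--Schwarz on each support $K$, where the paper routes through $\sigma_{\normalized( \LevelSet{\lzero}{k} )}$ and the set identity $\SPHERE \cap \LevelSet{\lzero}{k} = \bigcup_{\cardinal{K}\leq k}\SPHERE_K$. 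Second, for~\eqref{eq:conjugate_l0norm} you decompose $\lzero$ over level \emph{sets}, $\lzero=\inf_l\bp{l+\delta_{\LevelSet{\lzero}{l}}}$, and reuse~\eqref{eq:conjugate_delta_l0norm}; the paper decomposes over level \emph{curves} and must then invoke the closure identity~\eqref{eq:closure_level_curve}, which your route avoids entirely. Third, and most substantially, you prove the key fact that on $\SPHERE$ one has $\SupportNorm{k}{\bar\primal}\leq 1 \iff \lzero(\bar\primal)\leq k$ by a direct two-sided duality argument (Cauchy--Schwarz for one direction, the test point $\dual=\bar\primal/\SymmetricGaugeNorm{k}{\bar\primal}$ with the strict gap from~\eqref{eq:level_curve_l0_characterization} for the other); the paper instead derives the equivalent identity~\eqref{eq:level_set_l0_inter_sphere_b} from the closed-convex-hull description~\eqref{eq:dual_support_norm_unit_ball} of $\SupportBall{k}{\BALL}$ together with Lemma~\ref{lemma:convex_env} on extreme points of convex hulls of subsets of the sphere. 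Your route is more elementary and self-contained; the paper's buys reusable geometric facts about $\SupportBall{k}{\BALL}$ and the level sets that it exploits elsewhere. Finally, for~\eqref{eq:biconjugate_l0norm} your certificate $\dual=t\bar\primal$ attains the value $m$ at a finite $t$, whereas the paper's $\phi(\lambda)$ only reaches it in the limit $\lambda\to+\infty$; both hinge on the same decisive input, the strict inequality $\SymmetricGaugeNorm{l}{\bar\primal}<\norm{\bar\primal}$ for $l<\lzero(\bar\primal)$.
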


\begin{proof} 
We will use the framework and results of
Sect.~\ref{The_constant_along_primal_rays_coupling}
with \( \PRIMAL=\DUAL=\RR^d \),
equipped with the scalar product \( \proscal{\cdot}{\cdot} \)
and with the Euclidian norm
\( \norm{\cdot} = \sqrt{ \proscal{\cdot}{\cdot} } \).
\bigskip

\noindent $\bullet$ 
We prove~\eqref{eq:conjugate_delta_l0norm}:
\begin{align*}
\SFM{ \delta_{ \LevelSet{\lzero}{k} } }{-\couplingCAPRA}
&= 
\sigma_{ -\normalized\np{ \LevelSet{\lzero}{k} } }
\tag{by~\eqref{eq:one-sided_linear_Fenchel-Moreau_characteristic}}
\\
&= 
\sigma_{ \normalized\np{ \LevelSet{\lzero}{k} } }
\tag{ by symmetry of the set \( \LevelSet{\lzero}{k} \) 
and of the mapping~\( \normalized \) }
\\
&= 
\SFM{ \delta_{ \LevelSet{\lzero}{k} } }{\couplingCAPRA} 
\tag{by~\eqref{eq:one-sided_linear_Fenchel-Moreau_characteristic}}
\\
&= 
\sigma_{ \normalized\np{  \LevelSet{\lzero}{k} } }
\tag{by~\eqref{eq:one-sided_linear_Fenchel-Moreau_characteristic}}
\\
&= 
\sigma_{ \bp{ \SPHERE  \cap \LevelSet{\lzero}{k}} \cup \{0\} } 
\tag{by the expression~\eqref{eq:normalization_mapping}
of the normalization mapping~$\normalized$}
\\
&= 
\sup \ba{ \sigma_{ \LevelSet{\lzero}{k} \cap \SPHERE }, 0 }
\tag{ as is well-known in convex analysis }
\\
&= 
\sup \ba{ \sigma_{ \bigcup_{ {\cardinal{K} \leq k}} \SPHERE_{K} } , 0 }
\tag{ as \( \LevelSet{\lzero}{k} \cap \SPHERE =
 \bigcup_{ {\cardinal{K} \leq k}} \SPHERE_{K} \)
by~\eqref{eq:level_set_l0_inter_sphere_a} }
\\
&= 
\sup \ba{ 
\sup_{ \cardinal{K} \leq k} \sigma_{ \SPHERE_{K} }, 0 }
\tag{ as is well-known in convex analysis }
\\
&= 
\sup \ba{ \SymmetricGaugeNorm{k}{\cdot}, 0 }
\tag{ as \( \sup_{ \cardinal{K} \leq k} \sigma_{ \SPHERE_{K} }
= \SymmetricGaugeNorm{k}{\cdot} \) by~\eqref{eq:k-norm-from-support}}
\\
&= 
\SymmetricGaugeNorm{k}{\cdot} 
\eqfinp
\end{align*}
\medskip

\noindent $\bullet$ We prove~\eqref{eq:biconjugate_delta_l0norm}:
\begin{align*}
\SFMbi{ \delta_{ \LevelSet{\lzero}{k} } }{\couplingCAPRA}
&=
\LFM{ \bp{ \SFM{ \delta_{ \LevelSet{\lzero}{k} } }{\couplingCAPRA} }}
\circ \normalized
  \tag{ by the formula~\eqref{eq:CAPRA_Fenchel-Moreau_biconjugate} 
for the biconjugate }
\\
&=
\LFM{ \bp{ \SymmetricGaugeNorm{k}{\cdot} } }
\circ \normalized
  \tag{ by~\eqref{eq:conjugate_delta_l0norm} }
\\
&=
\LFM{ \bp{ \sigma_{ \SupportBall{k}{\BALL} } } }
\circ \normalized
  \tag{ by~\eqref{eq:norm_dual_norm}, that expresses a norm as 
a support function } 
\\
&=
\delta_{ \SupportBall{k}{\BALL} } \circ \normalized
\tag{ as \( \LFM{ \bp{ \sigma_{ \SupportBall{k}{\BALL} } } }=
\delta_{ \SupportBall{k}{\BALL} } \) since $\SupportBall{k}{\BALL}$
is closed convex }
\\
&=
\delta_{ \normalized^{-1}\np{\SupportBall{k}{\BALL} } }
\tag{ by the definition~\eqref{eq:characteristic_function} of a characteristic
  function }
\\
&=
\delta_{ \{0\} \cup \normalized^{-1}\np{\SPHERE \cap \SupportBall{k}{\BALL} } }
\tag{by~\eqref{eq:pre-image_normalization_mapping}
since $0\in\SupportBall{k}{\BALL}$ }
\\
&=
\delta_{ \{0\} \cup \normalized^{-1}
\np{ \SPHERE \cap \LevelSet{\lzero}{k} } }
  \tag{ as \( \SPHERE \cap \SupportBall{k}{\BALL}
=  \SPHERE \cap \LevelSet{\lzero}{k} \) 
by~\eqref{eq:level_set_l0_inter_sphere_b} }
\\
&=
\delta_{ \normalized^{-1}\np{\LevelSet{\lzero}{k}}  } 
\tag{by~\eqref{eq:pre-image_normalization_mapping}
since $0\in\LevelSet{\lzero}{k}$ }
\\
&=
\delta_{ \LevelSet{\lzero}{k}  } 
\tag{ as \( \lzero \circ \normalized = \lzero \) 
by~\eqref{eq:pseudonormlzero_invariance_normalized} }
 \eqfinp
\end{align*}
\medskip

\noindent$\bullet$ We prove~\eqref{eq:conjugate_l0norm}:
\begin{align*}
    \SFM{ \lzero }{\couplingCAPRA}
  &= 
\SFM{\Bp{ \inf_{l=0,1,\ldots,d} \ba{ \delta_{ \LevelCurve{\lzero}{l} } \UppPlus
    l } } }{\couplingCAPRA}
\tag{ since \(  \lzero = 
\inf_{l=0,1,\ldots,d} \ba{ \delta_{ \LevelCurve{\lzero}{l} } \UppPlus l } \)
by using the level curves~\eqref{eq:level_curve_pseudonormlzero} }
\\
  &= 
\sup_{l=0,1,\ldots,d} \ba{ 
\SFM{ \delta_{ \LevelCurve{\lzero}{k} }}{\couplingCAPRA}
 \LowPlus \np{-l} }  
\tag{ as conjugacies, being dualities, turn infima into suprema}
\\
  &= 
\sup_{l=0,1,\ldots,d} \ba{ 
\sigma_{ \normalized\np{\LevelCurve{\lzero}{l} } } 
 \LowPlus \np{-l} }  
\tag{ as \( \SFM{ \delta_{ \LevelCurve{\lzero}{k} }}{\couplingCAPRA}
= \sigma_{ \normalized\np{\LevelCurve{\lzero}{l} } } \)
by~\eqref{eq:one-sided_linear_Fenchel-Moreau_characteristic}}
\\
  &= 
\sup \Ba{ 0, \sup_{l=1,\ldots,d} \ba{ 
\sigma_{ \SPHERE \cap \LevelCurve{\lzero}{l} } 
 \LowPlus \np{-l} } }
\tag{ as \( \normalized\np{\LevelCurve{\lzero}{l} }
= \SPHERE \cap \LevelCurve{\lzero}{l} \) when $l\geq 1$ 
by~\eqref{eq:normalization_mapping} }
\\
  &= 
\sup \Ba{ 0, \sup_{l=1,\ldots,d} \ba{ 
\sigma_{ \overline{\SPHERE \cap \LevelCurve{\lzero}{l}} } 
 \LowPlus \np{-l} } }
\tag{ as \( \sigma_{ \Primal } = \sigma_{ \overline{\Primal} } \) 
for any \( \Primal \subset \PRIMAL \) }
\\
  &= 
\sup \Ba{ 0, \sup_{l=1,\ldots,d} \ba{ 
\sigma_{ \SPHERE \cap \LevelSet{\lzero}{l} } \LowPlus \np{-l} } }
\tag{ as \( \overline{\SPHERE \cap \LevelCurve{\lzero}{l}} =
\SPHERE \cap \LevelSet{\lzero}{l} \) by~\eqref{eq:closure_level_curve} }
\\
  &= 
\sup \Ba{ 0, \sup_{l=1,\ldots,d} \ba{ 
\sigma_{ \cup_{ \cardinal{K} \leq k} \SPHERE_{K} } \LowPlus \np{-l} } }
\tag{ as \( \SPHERE \cap \LevelSet{\lzero}{l} 
= \cup_{ \cardinal{K} \leq k} \SPHERE_{K}  \) 
by~\eqref{eq:level_set_l0_inter_sphere_a} } 
  \\
  &= 
    \sup \Ba{0, 
    \sup_{ l=1,\ldots,d} 
    \Bc{ \SymmetricGaugeNorm{l}{\dual} -l } }
\tag{ as \( \sup_{ \cardinal{K} \leq k} \sigma_{ \SPHERE_{K} }
= \SymmetricGaugeNorm{k}{\cdot} \) by~\eqref{eq:k-norm-from-support}}
  \\
  &= 
     \sup_{ l=0,1,\ldots,d} 
    \Bc{ \SymmetricGaugeNorm{l}{\dual} -l } 
\tag{ with the convention that \( \SymmetricGaugeNorm{0}{\cdot} = 0 \) }
\eqfinp
\end{align*}

\noindent$\bullet$ 
We prove~\eqref{eq:biconjugate_l0norm}.

It is easy to check that 
\( \SFMbi{ \lzero }{\couplingCAPRA}\np{0}=0=\lzero\np{0} \). 
Therefore, let $\primal \in \RR^d\backslash\{0\} $ be given and assume that 
$\lzero\np{\primal}=l \in \ba{1,\ldots,d}$. 
We consider the mapping $\phi: ]0,+\infty[ \to \RR$ defined by
\begin{equation}
\phi(\lambda) =
    \frac{ \proscal{\primal}{ \lambda \primal} }{ \norm{\primal} }
  + \Bp{ - \sup \Ba{0, \sup_{j=1,\ldots,d} 
\Bc{ \SymmetricGaugeNorm{j}{\lambda\primal} - j
    } } }
\eqsepv \forall \lambda > 0 \eqfinv
  \label{eq:phi}
\end{equation}
and we will show that 
\( \lim_{\lambda \to +\infty} \phi(\lambda) =l \).
We have 
\begin{align*}
  \phi(\lambda) 
  &=
    \lambda \norm{\primal}
    + \Bp{ - \sup \Ba{0, \sup_{j=1,\ldots,d} 
\Bc{ \SymmetricGaugeNorm{j}{\lambda\primal} - j
    } } }
\tag{ by definition~\eqref{eq:phi} of~$\phi$ }
\\
  &= 
    \lambda \SymmetricGaugeNorm{l}{\primal}
    + \inf \Ba{0, -\sup_{j=1,\ldots,d} 
\Bc{ \lambda \SymmetricGaugeNorm{j}{\primal} - j } }
    \tag{ as $\norm{\primal}=\SymmetricGaugeNorm{l}{\primal}$ 
when $\lzero\np{\primal}=l$
by~\eqref{eq:level_curve_l0_characterization} } 
\\
  &= 
\inf \Ba{ \lambda \SymmetricGaugeNorm{l}{\primal} ,
\lambda \SymmetricGaugeNorm{l}{\primal} + 
\inf_{j=1,\ldots,d} \Bp{ -\Bc{ \lambda \SymmetricGaugeNorm{j}{\primal} - j } } }
\\
  &= 
\inf \Ba{ \lambda \SymmetricGaugeNorm{l}{\primal} ,
\inf_{j=1,\ldots,d} \Bp{ \lambda \bp{ \SymmetricGaugeNorm{l}{\primal} -
    \SymmetricGaugeNorm{j}{\primal} } + j } }
\\
  &= 
\inf \Ba{ \lambda \SymmetricGaugeNorm{l}{\primal} ,
\inf_{j=1,\ldots,l} \Bp{ \lambda \bp{ \SymmetricGaugeNorm{l}{\primal} -
    \SymmetricGaugeNorm{j}{\primal} } + j } }
    \tag{ as $\SymmetricGaugeNorm{j}{\primal}= \SymmetricGaugeNorm{l}{\primal}$ for
    $j\ge l$ by~\eqref{eq:level_curve_l0_characterization} }
\\
  &= 
\inf \Ba{ \lambda \SymmetricGaugeNorm{l}{\primal} ,
\inf_{j=1,\ldots,l-1} \Bp{ \lambda \bp{ \SymmetricGaugeNorm{l}{\primal} -
    \SymmetricGaugeNorm{j}{\primal} } + j } , l }
\eqfinp
\end{align*}
Let us show that the two first terms in the infimum
go to $+\infty$ when \( \lambda \to +\infty \).
The first term goes to $+\infty$ 
because \( \SymmetricGaugeNorm{l}{\primal}=\norm{\primal}>0 \) by assumption
($\primal \neq 0$).
The second term also goes to $+\infty$ because 
$\lzero\np{\primal}=l $, so that 
\( \norm{\primal}=\SymmetricGaugeNorm{l}{\primal} > 
\SymmetricGaugeNorm{j}{\primal} \)
for $j=1,\ldots,l-1$
by~\eqref{eq:level_curve_l0_characterization}.
Therefore, 
\( \lim_{\lambda \to +\infty} \phi(\lambda) = 
\inf \{ +\infty, +\infty, l \}=l \).
This concludes the proof since 
\begin{align*}
  l = \lim_{\lambda \to +\infty} \phi(\lambda) 
& \leq 
\sup_{\dual \in \RR^d } \bgp{ \frac{ \proscal{\primal}{\dual} }{ \norm{\primal} }
      \LowPlus 
\Bp{ - \sup \Ba{0, \sup_{j=1,\ldots,d} 
\Bc{ \SymmetricGaugeNorm{j}{\dual} - j } } } }
\tag{ by definition~\eqref{eq:phi} of~$\phi$ }
\\
&=
\sup_{\dual \in \RR^d } \bgp{ \frac{ \proscal{\primal}{\dual} }{ \norm{\primal} }
      \LowPlus 
\Bp{ - \sup_{j=0,1,\ldots,d} 
\Bc{ \SymmetricGaugeNorm{j}{\dual} - j } } }
\tag{ by the convention that \( \SymmetricGaugeNorm{0}{\cdot} = 0 \) }
\\
&=
\sup_{\dual \in \RR^d } \bgp{ \frac{ \proscal{\primal}{\dual} }{ \norm{\primal} }
      \LowPlus \Bp{ - \SFM{ \lzero }{\couplingCAPRA}\np{\dual} } }
\tag{ by the formula~\eqref{eq:conjugate_l0norm} for 
\( \SFM{ \lzero }{\couplingCAPRA} \) }
\\
&=
\SFMbi{ \lzero }{\couplingCAPRA}\np{\primal}
\tag{ by the biconjugate formula~\eqref{eq:Fenchel-Moreau_biconjugate} }
\\
& \leq 
    \lzero\np{\primal} 
\tag{ by~\eqref{eq:galois-cor} giving 
\( \SFMbi{ \lzero }{\couplingCAPRA} \leq \lzero \)}
\\
& = l 
\tag{ by assumption }
\eqfinp
\end{align*}
Therefore, we have obtained \( l=\SFMbi{ \lzero
}{\couplingCAPRA}\np{\primal}=\lzero\np{\primal}\).

This ends the proof.
\end{proof}

\begin{corollary}
The \pseudonormlzero~$\lzero$ coincides, 
on the sphere~$\SPHERE$, 
with a convex lsc function defined on the whole space~$\RR^d$:
\begin{equation}
\lzero\np{\primal} =
\LFM{ \Bp{ \sup_{l=0,1,\ldots,d} \Bc{ \SymmetricGaugeNorm{l}{\cdot} -l } } }
\np{\primal} 
\eqsepv 
\forall \primal \in \SPHERE 
\eqfinp 
\end{equation}
\end{corollary}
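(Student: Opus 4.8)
The plan is to obtain the Corollary directly from the biconjugate formula~\eqref{eq:CAPRA_Fenchel-Moreau_biconjugate} of Proposition~\ref{pr:CAPRA_Fenchel-Moreau_conjugate}, together with the two identities~\eqref{eq:conjugate_l0norm} and~\eqref{eq:biconjugate_l0norm} just established in the Theorem. Specializing~\eqref{eq:CAPRA_Fenchel-Moreau_biconjugate} to \( \fonctionprimal = \lzero \) gives
\[
\SFMbi{ \lzero }{\couplingCAPRA} =
\LFM{ \bp{ \SFM{ \lzero }{\couplingCAPRA} } } \circ \normalized
\eqfinv
\]
where I have used that the \( \star \)-Fenchel-Moreau conjugate appearing in~\eqref{eq:CAPRA_Fenchel-Moreau_biconjugate} is nothing but the ordinary Fenchel conjugate~\( \LFM{\cdot} \). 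Into the left-hand side I substitute~\eqref{eq:biconjugate_l0norm}, namely \( \SFMbi{ \lzero }{\couplingCAPRA} = \lzero \); inside the Fenchel conjugate on the right-hand side I substitute the expression~\eqref{eq:conjugate_l0norm} for \( \SFM{ \lzero }{\couplingCAPRA} \). Abbreviating \( g = \sup_{l=0,1,\ldots,d} \Bc{ \SymmetricGaugeNorm{l}{\cdot} - l } \), this produces the global identity \( \lzero = \LFM{g} \circ \normalized \) valid on all of~\( \RR^d \).

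The second step is to restrict this identity to the sphere. By the definition~\eqref{eq:normalization_mapping} of the normalization mapping, every \( \primal \in \SPHERE \) has \( \norm{\primal} = 1 \), hence \( \normalized\np{\primal} = \primal \); in other words, \( \normalized \) acts as the identity on~\( \SPHERE \). Precomposition with~\( \normalized \) is therefore invisible there, and the global identity collapses to \( \lzero\np{\primal} = \LFM{g}\np{\primal} \) for every \( \primal \in \SPHERE \), which is exactly the announced formula.

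Finally, I would note that \( \LFM{g} \) is a convex lsc function defined on the whole of~\( \RR^d \). This is automatic: as a Fenchel conjugate on the self-paired Euclidean space~\( \RR^d \), it is a supremum of continuous affine functions, hence convex and lsc, whatever \( g \) may be. This is the step carrying the actual meaning of the Corollary --- that the nonconvex \pseudonormlzero\ agrees on the sphere with a bona fide convex lsc function --- yet it costs nothing once the identity \( \lzero = \LFM{g} \circ \normalized \) is in hand. Accordingly there is no genuine obstacle: every ingredient is a citation of an already-proved result, and the only points demanding a moment's attention are the identification of the \( \star \)-conjugate with \( \LFM{\cdot} \) and the fact that \( \normalized \) fixes the sphere pointwise.
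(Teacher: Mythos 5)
Your proposal is correct and follows essentially the same route as the paper: both rest on the two identities \( \SFMbi{\lzero}{\couplingCAPRA}=\lzero \) and \( \SFM{\lzero}{\couplingCAPRA}=\sup_{l}\Bc{\SymmetricGaugeNorm{l}{\cdot}-l} \) from the Theorem, combined with the biconjugate formula; the paper merely unrolls the supremum over \( \dual \) explicitly and uses \( \norm{\primal}=1 \) inside the coupling, which is the same observation as your remark that \( \normalized \) fixes the sphere pointwise.
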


\begin{proof}
For \( \primal \in \SPHERE \), we have 
\begin{align*}
  \lzero\np{\primal} 
  &=
    \SFMbi{ \lzero }{\couplingCAPRA}\np{\primal}
\tag{ by~\eqref{eq:biconjugate_l0norm} }
\\
&=
   \sup_{ \dual \in \RR^d } \Bp{ \couplingCAPRA\np{\primal,\dual} 
      \LowPlus \bp{ -\SFM{ \lzero }{\couplingCAPRA}\np{\dual} } } 
\tag{ by the biconjugate formula~\eqref{eq:Fenchel-Moreau_biconjugate} }
\\
&=
   \sup_{ \dual \in \RR^d } \Bp{ \proscal{\primal}{\dual} 
      \LowPlus \bp{ -\SFM{ \lzero }{\couplingCAPRA}\np{\dual} } } 
\tag{ by~\eqref{eq:coupling_CAPRAC_original} with
\( \norm{\primal}=1 \) since \( \primal \in \SPHERE \) } 
\\
&=
   \sup_{ \dual \in \RR^d } \bgp{ \proscal{\primal}{\dual} 
      \LowPlus \Bp{ -\bp{ 
\sup_{l=0,1,\ldots,d} \Bc{ \SymmetricGaugeNorm{l}{\dual} -l }
} }
  }
\tag{ by~\eqref{eq:conjugate_l0norm} }
\\
&=
\SFM{ \Bp{ 
\sup_{l=0,1,\ldots,d} \Bc{ \SymmetricGaugeNorm{l}{\dual} -l }
} }{\star}\np{\primal}  
\tag{by the expression~\eqref{eq:Fenchel_conjugate} of the Fenchel conjugate}
\eqfinp 
\end{align*}
This ends the proof.
\end{proof}

\section{Conclusion}

In this paper, we have introduced a novel class of 
one-sided linear couplings, and have shown that they induce conjugacies 
that share nice properties with the classic Fenchel conjugacy.
Among them, we have distinguished a novel coupling, \Capra, 
having the property of being constant along primal rays, 
like the \pseudonormlzero.
For the \Capra\ conjugacy, induced by the coupling \Capra, 
we have proved that the \pseudonormlzero\ is equal to its biconjugate:
hence, the \pseudonormlzero\ is \Capra-convex 
in the sense of generalized convexity.
We have also provided expressions for conjugates in terms 
of two families of dual norms, the $2$-$k$-symmetric gauge norms
and the $k$-support norms. 

In a companion paper \cite{Chancelier-DeLara:2019c},
we apply our results to so-called sparse optimization, that is, 
problems where one looks for solution that have few nonzero components.
We provide a systematic way to obtain 
convex minimization programs (over unit balls of some norms)
that are lower bounds
for the original exact sparse optimization problem. 
\bigskip

\textbf{Acknowledgements.}
We want to thank 
Juan Enrique Martínez Legaz
and
Jean-Baptiste Hiriart-Urruty
for discussions on first versions of this work. 

\appendix

\section{Appendix}
\label{Appendix}

\subsection{Background on J.~J. Moreau lower and upper additions}
\label{Moreau_lower_and_upper_additions}




When we manipulate functions with values 
in~$\bar\RR = [-\infty,+\infty] $,
we adopt the following Moreau \emph{lower addition} or
\emph{upper addition}, depending on whether we deal with $\sup$ or $\inf$
operations. 
We follow \cite{Moreau:1970}.
In the sequel, $u$, $v$ and $w$ are any elements of~$\bar\RR$.

\subsubsection*{Moreau lower addition}

\begin{subequations}
  The Moreau \emph{lower addition} extends the usual addition with 
  \begin{equation}
    \np{+\infty} \LowPlus \np{-\infty} = \np{-\infty} \LowPlus \np{+\infty} = -\infty \eqfinp
    \label{eq:lower_addition}
  \end{equation}
  With the \emph{lower addition}, \( \np{\bar\RR, \LowPlus } \) is a convex cone,
  with $ \LowPlus $ commutative and associative.
  The lower addition displays the following properties:
  \begin{align}
    u \leq u' \eqsepv v \leq v' 
    & 
      \Rightarrow u \LowPlus v \leq u'  \LowPlus v' 
\eqfinv 
\label{eq:lower_addition_leq}
\\
    (-u) \LowPlus (-v) 
    & 
      \leq -(u \LowPlus v) 
\eqfinv 
\label{eq:lower_addition_minus}
\\ 
    (-u) \LowPlus u 
    & 
      \leq 0 
\eqfinv 
\label{eq:lower_substraction_le_zero}
\\
    \sup_{a\in\AAA} f(a) \LowPlus \sup_{b\in\BB} g(b)
    &=
      \sup_{a\in\AAA,b\in\BB} \bp{f(a) \LowPlus g(b)} 
\eqfinv
\label{eq:lower_addition_sup}
\\ 
    \inf_{a\in\AAA} f(a) \LowPlus \inf_{b\in\BB} g(b) 
    & \leq 
      \inf_{a\in\AAA,b\in\BB} \bp{f(a) \LowPlus g(b)} 
\eqfinv
\label{eq:lower_addition_inf}
\\ 
    t < +\infty \Rightarrow    \inf_{a\in\AAA} f(a) \LowPlus t 
    & =
      \inf_{a\in\AAA} \bp{f(a) \LowPlus t} 
\eqfinp
\label{eq:lower_addition_inf_constant}
  \end{align}
\end{subequations}

\subsubsection*{Moreau upper addition}

\begin{subequations}
  The Moreau \emph{upper addition} extends the usual addition with 
  \begin{equation}
    \np{+\infty} \UppPlus \np{-\infty} = 
    \np{-\infty} \UppPlus \np{+\infty} = +\infty \eqfinp
    \label{eq:upper_addition}
  \end{equation}
  With the \emph{upper addition}, \( \np{\bar\RR, \UppPlus } \) is a convex cone,
  with $ \UppPlus $ commutative and associative.
  The upper addition displays the following properties:
  \begin{align}
    u \leq u' \eqsepv v \leq v' 
    & 
      \Rightarrow u \UppPlus v \leq u' \UppPlus v' \eqfinv
      \label{eq:upper_addition_leq} \\
    (-u) \UppPlus (-v) 
    & 
      \geq -(u \UppPlus v) \eqfinv
      \label{eq:upper_addition_minus} 
    \\ 
    (-u) \UppPlus u 
    & 
      \geq 0 \eqfinv
      \label{eq:upper_substraction_ge_zero} 
    \\ 
    \inf_{a\in\AAA} f(a) \UppPlus \inf_{b\in\BB} g(b) 
    & =
      \inf_{a\in\AAA,b\in\BB} \bp{f(a) \UppPlus g(b)} \eqfinv
      \label{eq:upper_addition_inf} 
    \\
    \sup_{a\in\AAA} f(a) \UppPlus \sup_{b\in\BB} g(b)
    & \geq 
      \sup_{a\in\AAA,b\in\BB} \bp{f(a) \UppPlus g(b)} \eqfinv
      \label{eq:upper_addition_sup}
    \\ 
    -\infty < t \Rightarrow    \sup_{a\in\AAA} f(a) \UppPlus t 
    & =
      \sup_{a\in\AAA} \bp{f(a) \UppPlus t} \eqfinp
      \label{eq:upper_addition_sup_constant}
  \end{align}
\end{subequations}

\subsubsection*{Joint properties of the Moreau lower and upper addition}

\begin{subequations}
We obviously have that
\begin{equation}
  u \LowPlus v \leq u \UppPlus v \eqfinp 
\label{eq:lower_leq_upper_addition}
\end{equation}
The Moreau lower and upper additions are related by
\begin{equation}
-(u \UppPlus v) = (-u) \LowPlus (-v) \eqsepv 
-(u \LowPlus v) = (-u) \UppPlus (-v) \eqfinp 
\label{eq:lower_upper_addition_minus}  
\end{equation}
They satisfy the inequality
\begin{equation}
(u \UppPlus v) \LowPlus w \leq u \UppPlus (v \LowPlus w) \eqfinp
                                 \label{eq:lower_upper_addition_inequality} 
\end{equation}
with 
  \begin{equation}
(u \UppPlus v) \LowPlus w < u \UppPlus (v \LowPlus w) 
\iff
\begin{cases}
 u=+\infty \mtext{ and } w=-\infty \eqsepv \\
\mtext{ or } \\
u=-\infty \mtext{ and } w=+\infty 
\mtext{ and } -\infty < v < +\infty \eqfinp
\end{cases}
                                 \label{eq:lower_upper_addition_equality} 
\end{equation}
Finally, we have that 
\begin{align}
& u \LowPlus (-v) \leq 0 \iff u \leq v  \iff  0 \leq v \UppPlus (-u) 
\eqfinv
\label{eq:lower_upper_addition_comparisons}
\\
& u \LowPlus (-v) \leq w \iff u \leq v \UppPlus w \iff u \LowPlus (-w) \leq v
\eqfinv 
\\  
& w \leq v \UppPlus (-u) \iff u \LowPlus w \leq v \iff u \leq v \UppPlus (-w) 
\eqfinp
\end{align}
 \end{subequations}



\subsection{Properties of $2$-$k$-symmetric gauge norms}

Before studying properties of $2$-$k$-symmetric gauge norms,
we recall the notion of dual norm.
\medskip

Let \( \triplenorm{\cdot} \) be a norm on~$\RR^d$,
with unit ball denoted by
\begin{equation}
    \BALL_{\triplenorm{\cdot}}
= 
\defset{\primal \in \RR^d}{\triplenorm{\primal} \leq 1} 
 \eqfinp
    \label{eq:triplenorm_unit_ball}
\end{equation}

\begin{definition}
The following expression 
  \begin{equation}
    \triplenorm{\dual}_\star = 
    \sup_{ \triplenorm{\primal} \leq 1 } \proscal{\primal}{\dual} 
    \eqsepv \forall \dual \in \DUAL
  \end{equation}
defines a norm on~$\DUAL$, 
called the \emph{dual norm} \( \triplenorm{\cdot}_\star \).
\label{de:dual_norm}
\end{definition}
We have 
\begin{equation}
    \triplenorm{\cdot}_\star = \sigma_{\BALL_{\triplenorm{\cdot}}} 
\mtext{ and } 
\triplenorm{\cdot} = \sigma_{\BALL_{\triplenorm{\cdot}_\star}}
\eqfinv
    \label{eq:norm_dual_norm}
\end{equation}
where \( \BALL_{\triplenorm{\cdot}_\star} \) is 
the unit ball of the dual norm:
  \begin{equation}
\BALL_{\triplenorm{\cdot}_\star}
= \defset{\dual \in \DUAL}{\triplenorm{\dual}_\star \leq 1}
     \eqfinp
  \end{equation}

For all \( K \subset\ba{1,\ldots,d} \), 
we introduce \emph{degenerate} unit ``spheres'' and ``balls'' of \( \RR^d \) by 
\begin{subequations}
  \begin{align}
    \ESPHERE_{K} 
    &= 
\defset{\primal \in \RR^d}{ \norm{\primal_{K}} = 1} 
\eqfinv 
\label{eq:ESPHERE_K}
\\
    \EBALL_{K} 
    &=
 \defset{\primal \in \RR^d}{ \norm{\primal_{K}} \le 1} 
      \eqfinv 
\label{eq:EBALL_K}
\\
    \SPHERE_{K} 
    &= 
\defset{\primal \in \RR^d}%
      {\primal_{-K}=0 \mtext{ and } \norm{\primal_{K}} = 1} 
      \eqfinv 
\label{eq:SPHERE_K} 
\\
    \BALL_{K} 
    &= 
\defset{\primal \in \RR^d}%
      {\primal_{-K}=0 \mtext{ and } \norm{\primal_{K}} \leq 1} 
      \eqfinv    
\label{eq:BALL_K} 
  \end{align}
\end{subequations}
where $\primal_{K}$ has been defined right before
Definition~\ref{de:symmetric_gauge_norm}.

In what follows, the notation \( \bigcup_{\cardinal{K} \leq k} \)
is a shorthand for 
\( \bigcup_{ { K \subset \na{1,\ldots,d}, \cardinal{K} \leq k}} \),
\( \bigcap_{\cardinal{K} \leq k} \) for 
\( \bigcap_{ { K \subset \na{1,\ldots,d}, \cardinal{K} \leq k}} \),
and \( \sup_{\cardinal{K} \leq k} \) for 
\( \sup_{ { K \subset \na{1,\ldots,d}, \cardinal{K} \leq k}} \)). 
The same holds true for \( \bigcup_{\cardinal{K} = k} \), 
\( \bigcap_{\cardinal{K} = k} \)
and \( \sup_{\cardinal{K} = k} \).

\begin{proposition}
Let \( k \in \ba{1,\ldots,d} \).
\begin{itemize}
\item 
The following inequalities hold true
  \begin{equation}
    \sup_{ j=1,\ldots,d } \module{\primal_j} = 
\norm{\primal}_{\infty} =  \SymmetricGaugeNorm{1}{\primal} \leq \cdots \leq 
    \SymmetricGaugeNorm{k}{\primal} \leq 
\SymmetricGaugeNorm{k+1}{\primal} \leq \cdots \leq 
    \SymmetricGaugeNorm{n}{\primal} =  \norm{\primal} 
    \eqfinp
    \label{eq:k_approx_props}
  \end{equation}
\item 
The two ``spheres'' in~\eqref{eq:ESPHERE_K} and \eqref{eq:SPHERE_K}
are related by
  \begin{equation}
    \SPHERE_{K} = \SPHERE \cap \ESPHERE_{K}  
\eqsepv \forall K \subset \ba{1,\ldots,d} 
\eqfinp
    \label{eq:sphere_and_esphere_K}
  \end{equation}
\item 
The $2$-$k$-symmetric gauge norm~\( \SymmetricGaugeNorm{k}{\cdot} \) 
in Definition~\ref{de:symmetric_gauge_norm} satisfies
  \begin{equation}
    \SymmetricGaugeNorm{k}{\cdot} = 
    \sigma_{ \cup_{ \cardinal{K} \leq k} \BALL_{K} }
= \sup_{ \cardinal{K} \leq k } \sigma_{ \BALL_{K} }
= \sup_{ \cardinal{K} \leq k } \sigma_{ \SPHERE_{K} }
= \sigma_{ \cup_{ \cardinal{K} \leq k} \SPHERE_{K} } 
    \eqfinv
    \label{eq:k-norm-from-support}
  \end{equation}
where \( \cardinal{K} \leq k \) can be replaced by \( \cardinal{K} = k \) 
everywhere.
\item 
The unit sphere~\( \SymmetricGauge{k}{\SPHERE} \) 
and ball~\( \SymmetricGauge{k}{\BALL} \)
of \( \RR^d \) for the $2$-$k$-symmetric gauge norm
\( \SymmetricGaugeNorm{k}{\cdot} \) 
in Definition~\ref{de:symmetric_gauge_norm} satisfy 
  \begin{subequations}
    \begin{align}
      \SymmetricGauge{k}{\BALL} &= 
\defset{\primal \in \RR^d}{ \SymmetricGaugeNorm{k}{\primal} \leq 1} 
                    = \bigcap_{\cardinal{K} \leq k} \EBALL_{K}
                    \eqfinv    
                    \label{eq:symmetric_gauge_norm_unit-ball} \\
      \SymmetricGauge{k}{\SPHERE} 
&= 
\defset{\primal \in \RR^d}{ \SymmetricGaugeNorm{k}{\primal} = 1} 
                      = \SymmetricGauge{k}{\BALL} \cap
         \Bp{ \bigcup_{\cardinal{K} \leq k} \ESPHERE_{K} }
                      \eqfinv
                      \label{eq:symmetric_gauge_norm_unit-sphere}
    \end{align}
  \end{subequations}
where \( \cardinal{K} \leq k \) can be replaced by \( \cardinal{K} = k \) everywhere.
\item 
The unit ball~\( \SupportBall{k}{\BALL} \) of the 
$k$-support norm~\( \SupportNorm{k}{\cdot} \)
in Definition~\ref{de:symmetric_gauge_norm} satisfies
\begin{equation}
  \SupportBall{k}{\BALL} = 
  \defset{\primal \in \RR^d}{ \SupportNorm{k}{\primal} \leq 1} 
= \closedconvexhull\bp{ \bigcup_{ \cardinal{K} \leq k} \BALL_{K} }
= \closedconvexhull\bp{ \bigcup_{ \cardinal{K} \leq k} \SPHERE_{K} }
\eqfinv
\label{eq:dual_support_norm_unit_ball}
\end{equation}
where \( \cardinal{K} \leq k \) can be replaced by \( \cardinal{K} = k \) 
everywhere.
\end{itemize}
\end{proposition}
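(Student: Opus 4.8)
The plan is to establish the five itemized claims in turn, each reducing to an elementary property of the Euclidean norm together with standard support-function calculus; throughout I use the monotonicity observation $K \subset K' \Rightarrow \norm{\primal_K} \leq \norm{\primal_{K'}}$ recorded just after Definition~\ref{de:symmetric_gauge_norm}. The chain~\eqref{eq:k_approx_props} is immediate: monotonicity of $\SymmetricGaugeNorm{k}{\primal}$ in $k$ follows from~\eqref{eq:symmetric_gauge_norm} because the supremum ranges over an enlarging family of index sets, while the two endpoints come from direct evaluation, $\SymmetricGaugeNorm{1}{\primal} = \sup_j \module{\primal_j} = \norm{\primal}_{\infty}$ (admissible $K$ being singletons) and $\SymmetricGaugeNorm{d}{\primal} = \norm{\primal}$ (the relevant $K$ being $\ba{1,\ldots,d}$). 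For the sphere identity~\eqref{eq:sphere_and_esphere_K} I would invoke the orthogonal decomposition $\norm{\primal}^2 = \norm{\primal_K}^2 + \norm{\primal_{-K}}^2$: membership in $\SPHERE \cap \ESPHERE_K$ means $\norm{\primal} = \norm{\primal_K} = 1$, which forces $\primal_{-K} = 0$, and the converse is just as direct, recovering the defining condition of $\SPHERE_K$ in~\eqref{eq:SPHERE_K}.

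The core is the support-function representation~\eqref{eq:k-norm-from-support}, which rests on the single computation $\sigma_{\BALL_K}(\dual) = \sigma_{\SPHERE_K}(\dual) = \norm{\dual_K}$. Indeed, every $\primal$ in $\BALL_K$ or $\SPHERE_K$ satisfies $\primal_{-K} = 0$, so the pairing collapses to $\proscal{\primal}{\dual} = \proscal{\primal_K}{\dual_K}$, and Cauchy--Schwarz gives $\sup_{\norm{\primal_K} \leq 1} \proscal{\primal_K}{\dual_K} = \sup_{\norm{\primal_K} = 1} \proscal{\primal_K}{\dual_K} = \norm{\dual_K}$. Taking the supremum over $\cardinal{K} \leq k$, using that the support function of a union is the supremum of the support functions, and reading off the definition~\eqref{eq:symmetric_gauge_norm}, yields the four equalities at once; the passage from $\cardinal{K} \leq k$ to $\cardinal{K} = k$ is again the monotonicity observation, which lets one enlarge any $K$ to cardinal exactly $k$ without decreasing $\norm{\primal_K}$.

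For the unit ball~\eqref{eq:symmetric_gauge_norm_unit-ball} I would simply unfold the definition: $\SymmetricGaugeNorm{k}{\primal} \leq 1$ iff $\norm{\primal_K} \leq 1$ for every $K$ with $\cardinal{K} \leq k$, that is, iff $\primal \in \EBALL_K$ for all such $K$, which is membership in the intersection. For the unit sphere~\eqref{eq:symmetric_gauge_norm_unit-sphere} I would use that the supremum in~\eqref{eq:symmetric_gauge_norm} is over a finite family and hence attained: $\SymmetricGaugeNorm{k}{\primal} = 1$ holds iff $\SymmetricGaugeNorm{k}{\primal} \leq 1$ and $\norm{\primal_K} = 1$ for at least one admissible $K$, i.e. iff $\primal$ lies in $\SymmetricGauge{k}{\BALL}$ and in some $\ESPHERE_K$ with $\cardinal{K} \leq k$.

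The delicate item, and the one I expect to be the main obstacle, is the $k$-support ball~\eqref{eq:dual_support_norm_unit_ball}, because it requires a genuine duality step rather than mere unfolding of definitions. Here I would combine the preceding item with the relation $\triplenorm{\cdot} = \sigma_{\BALL_{\triplenorm{\cdot}_\star}}$ of~\eqref{eq:norm_dual_norm}: applied to the norm $\SymmetricGaugeNorm{k}{\cdot}$, whose dual is $\SupportNorm{k}{\cdot}$ by~\eqref{eq:support_norm}, it gives $\SymmetricGaugeNorm{k}{\cdot} = \sigma_{\SupportBall{k}{\BALL}}$; on the other hand~\eqref{eq:k-norm-from-support} gives $\SymmetricGaugeNorm{k}{\cdot} = \sigma_{\cup_{\cardinal{K} \leq k} \BALL_K}$, and, since $\sigma_{\SPHERE_K} = \sigma_{\BALL_K}$, also $\SymmetricGaugeNorm{k}{\cdot} = \sigma_{\cup_{\cardinal{K} \leq k} \SPHERE_K}$. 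The crux is then the bipolar fact that two sets sharing a support function have the same closed convex hull; since $\SupportBall{k}{\BALL}$ is itself closed and convex, it equals both $\closedconvexhull\bp{\cup_{\cardinal{K} \leq k} \BALL_K}$ and $\closedconvexhull\bp{\cup_{\cardinal{K} \leq k} \SPHERE_K}$. Making this step airtight --- invoking that a closed convex set is the intersection of the closed half-spaces cut out by its support function, so that equality of support functions forces equality of closed convex hulls, and confirming the $\cardinal{K} = k$ variant via the monotonicity observation --- is where the care is needed; everything else is bookkeeping.
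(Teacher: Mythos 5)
Your proposal is correct and follows essentially the same route as the paper: the same orthogonal decomposition for the sphere identity, the same key computation $\sigma_{\BALL_K}(\dual)=\norm{\dual_K}$ combined with the support-function-of-a-union rule for~\eqref{eq:k-norm-from-support}, and the same bipolar argument (equal support functions force equal closed convex hulls, and $\SupportBall{k}{\BALL}$ is already closed convex) for~\eqref{eq:dual_support_norm_unit_ball}. The only cosmetic difference is that you evaluate $\sigma_{\SPHERE_K}$ directly by Cauchy--Schwarz where the paper passes through $\closedconvexhull\np{\SPHERE_{K}}=\BALL_{K}$; both are fine.
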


\begin{proof}

\noindent $\bullet$ 
The Inequalities and Equalities~\eqref{eq:k_approx_props} easily derive 
from the very definition~\eqref{eq:symmetric_gauge_norm}
of the $2$-$k$-symmetric gauge norm
\( \SymmetricGaugeNorm{k}{\cdot} \).
\medskip

\noindent $\bullet$ 
We prove Equation~\eqref{eq:sphere_and_esphere_K}.
Recall that, following notation from Game Theory, 
we denote by $-K$ the complementary subset 
of~$K$ in \( \ba{1,\ldots,d} \): \( K \cup (-K) = \ba{1,\ldots,d} \)
and \( K \cap (-K) = \emptyset \). 
Then, we have that \( \primal=\primal_K+\primal_{-K} \),
for any \( \primal\in\RR^d \),  
and the decomposition is orthogonal, leading to
\begin{equation}
\bp{\forall \primal \in \RR^d } \qquad
  \primal=\primal_K+\primal_{-K} 
\eqsepv \primal_K \perp \primal_{-K} 
\mtext{ and }
\norm{\primal}^2=\norm{\primal_K}^2+\norm{\primal_{-K}}^2
\eqfinp
\label{eq:decomposition_orthogonal}
\end{equation}
For \( K \subset \ba{1,\ldots,d} \), we have that 
  \begin{align*}
    \primal \in \SPHERE \mtext{ and } 
\primal \in \ESPHERE_{K}  
    &\iff 
      1=\norm{\primal}^2 \mtext{ and } 1=\norm{\primal_K}^2 
\tag{ by~\eqref{eq:ESPHERE_K} }
    \\
    &\iff 
      1=\norm{\primal}^2=\norm{\primal_K}^2+\norm{\primal_{-K}}^2
      \mtext{ and } 1=\norm{\primal_K}^2
\tag{ by~\eqref{eq:decomposition_orthogonal} }
\\
    &\iff 
      \norm{\primal_{-K}}=0 \mtext{ and } 1=\norm{\primal_K} 
\tag{ by~\eqref{eq:decomposition_orthogonal} }
    \\
    &\iff 
      \primal \in \SPHERE_{K} 
\tag{ by~\eqref{eq:SPHERE_K}}
\eqfinp 
  \end{align*}
\medskip

\noindent $\bullet$ 
We prove Equation~\eqref{eq:k-norm-from-support}. 
For this purpose, we first establish that
  \begin{equation}
    \sigma_{ \BALL_{K} }(\dual) 
=\norm{\dual_K} 
    \eqsepv \forall \dual \in \RR^d 
    \eqfinp   
    \label{eq:support_functions_unit_sphere_ball_K}
  \end{equation}
Indeed, for \( \dual \in \RR^d \), we have
\begin{align*}
      \sigma_{ \BALL_{K} }(\dual)
&=
\sup_{ \primal \in \BALL_{K} } \proscal{\primal}{\dual}
\tag{ by definition~\eqref{eq:support_function} of a support function }
\\
&=
\sup_{ \primal \in \BALL_{K} } 
\proscal{\primal_K+\primal_{-K}}{\dual_K+\dual_{-K}}
\tag{ by the decomposition~\eqref{eq:decomposition_orthogonal} }
\\
&=
\sup_{ \primal \in \BALL_{K} } 
\bp{ \proscal{\primal_K}{\dual_K}
+ \proscal{\primal_{-K}}{\dual_{-K}} }
\tag{ because \( \primal_K \perp \dual_{-K} \) and
\( \primal_{-K} \perp \dual_K \) by~\eqref{eq:decomposition_orthogonal} }
\\
&=
\sup \ba{  \proscal{\primal_K}{\dual_K}
+ \proscal{\primal_{-K}}{\dual_{-K}}, \, 
\primal_{-K}=0 \mtext{ and } \norm{\primal_{K}} \leq 1 } 
\tag{ by definition~\eqref{eq:BALL_K} of \( \BALL_{K} \) }
\\
&=
\sup \ba{  \proscal{\primal_K}{\dual_K}, \, 
\norm{\primal_{K}} \leq 1 } 
\\
&=
\norm{\dual_K} 
\end{align*}
as is well-known for the Euclidian norm~$\norm{\cdot}$, when restricted 
to the subspace \( \defset{\primal \in \RR^d}{\primal_{-K}=0} \)
(because it is equal to its dual norm). 
Then, for all $\dual \in \RR^d$, we have that 
\begin{align*}
  \sigma_{ \cup_{ \cardinal{K} \leq k } \BALL_{K} }(\dual)
  &= 
\sup_{\cardinal{K} \leq k} \sigma_{ \BALL_{K} }(\dual) 
\tag{ as is well-known in convex analysis }
\\
  &= 
\sup_{\cardinal{K} \leq k} \norm{\dual_K}
    \tag{by~\eqref{eq:support_functions_unit_sphere_ball_K} }
  \\
  & =\SymmetricGaugeNorm{k}{\dual} 
\tag{by definition~\eqref{eq:symmetric_gauge_norm}
of \( \SymmetricGaugeNorm{k}{\cdot} \) }
 \eqfinp
\end{align*}
Now, by~\eqref{eq:SPHERE_K} and \eqref{eq:BALL_K}, 
it is straightforward that 
\( \closedconvexhull\np{\SPHERE_{K}} = \BALL_{K} \)
and we deduce that 
\[
\SymmetricGaugeNorm{k}{\cdot} 
=
\sigma_{ \cup_{ \cardinal{K} \leq k } \BALL_{K} }
=
\sup_{\cardinal{K} \leq k} \sigma_{ \BALL_{K} }
=
\sup_{\cardinal{K} \leq k} 
\sigma_{ \closedconvexhull\np{\SPHERE_{K}} }
=
\sup_{\cardinal{K} \leq k} \sigma_{ \SPHERE_{K} }
=
\sigma_{ \cup_{ \cardinal{K} \leq k } \SPHERE_{K} }
\eqfinv
\]
giving Equation~\eqref{eq:k-norm-from-support}. 

If we take over the proof using the property that
 \( \sup_{\cardinal{K} \leq k}\norm{\dual_K} 
= \sup_{\cardinal{K} = k}\norm{\dual_K} \) 
in~\eqref{eq:symmetric_gauge_norm},
we deduce that 
\( \cardinal{K} \leq k \) can be replaced by \( \cardinal{K} = k \) everywhere.
\medskip

\noindent $\bullet$ 
We prove Equation~\eqref{eq:symmetric_gauge_norm_unit-ball}:
  \begin{align*}
    \SymmetricGauge{k}{\BALL}  
    &= 
\defset{\primal \in \RR^d}{ \SymmetricGaugeNorm{k}{\primal} \leq 1} 
\tag{ by definition of the ball \( \SymmetricGauge{k}{\BALL} \) }
\\
    &= 
\defset{\primal \in \RR^d}{ \sup_{ 
\cardinal{K} \leq k}  
      \norm{\primal_K} \leq 1} 
\tag{ by definition~\eqref{eq:symmetric_gauge_norm}
of \( \SymmetricGaugeNorm{k}{\cdot} \) }
\\
    &= 
\bigcap_{ 
\cardinal{K} \leq k} 
      \defset{\primal \in \RR^d}{ \norm{\primal_K} \leq 1} 
\\
    &= 
\bigcap_{ 
\cardinal{K} \leq k}
      \EBALL_{K}
\eqfinp
\tag{ by definition~\eqref{eq:EBALL_K} of \( \EBALL_{K} \) }
  \end{align*}
If we take over the proof using the property that
 \( \sup_{\cardinal{K} \leq k}\norm{\dual_K} 
= \sup_{\cardinal{K} = k}\norm{\dual_K} \) 
in~\eqref{eq:symmetric_gauge_norm},
we deduce that 
\( \cardinal{K} \leq k \) can be replaced by \( \cardinal{K} = k \) everywhere.
\medskip

\noindent $\bullet$ 
We prove Equation~\eqref{eq:symmetric_gauge_norm_unit-sphere}: 
  \begin{align*}
    \SymmetricGauge{k}{\SPHERE}  
    &= \defset{\primal \in \RR^d}{ \SymmetricGaugeNorm{k}{\primal} = 1} 
\tag{ by definition of the sphere $\SymmetricGauge{k}{\SPHERE}$ }
\\
    &= \defset{\primal \in \RR^d}{ \sup_{ 
      \cardinal{K} \leq k}  \norm{\primal_K} = 1} 
\tag{ by definition~\eqref{eq:symmetric_gauge_norm}
of \( \SymmetricGaugeNorm{k}{\cdot} \) }
\\
    &= \defset{\primal \in \RR^d}%
{ \sup_{ 
\cardinal{K} \leq k}  \norm{\primal_K} \le 1}\\
    & \hphantom{===} \bigcap 
      \defset{\primal \in \RR^d}%
{ \exists K \subset \ba{1,\ldots,d} \eqsepv \cardinal{K} \leq k 
\eqsepv \norm{\primal_K} = 1 }  
\\
    &= \SymmetricGauge{k}{\BALL} \cap 
      \Bp{\bigcup_{ 
\cardinal{K} \leq k} 
      \defset{\primal \in \RR^d}{ \norm{\primal_K} = 1}} 
\tag{ by definition of the ball \( \SymmetricGauge{k}{\BALL} \) }
\\
    &= \SymmetricGauge{k}{\BALL} \cap 
      \Bp{
      \bigcup_{ 
\cardinal{K} \leq k} 
      \ESPHERE_{K}}
\eqfinp
\tag{ by definition~\eqref{eq:ESPHERE_K} of \( \ESPHERE_{K} \) }
  \end{align*}
If we take over the proof using the property that
 \( \sup_{\cardinal{K} \leq k}\norm{\dual_K} 
= \sup_{\cardinal{K} = k}\norm{\dual_K} \) 
in~\eqref{eq:symmetric_gauge_norm},
we deduce that 
\( \cardinal{K} \leq k \) can be replaced by \( \cardinal{K} = k \) 
everywhere.
\medskip

\noindent $\bullet$ 
We prove Equation~\eqref{eq:dual_support_norm_unit_ball}.
On the one hand, by the first relation in~\eqref{eq:norm_dual_norm},
we have that \( \SymmetricGaugeNorm{k}{\cdot}=\sigma_{ \SupportBall{k}{\BALL} } \).
On the other hand, 
by~\eqref{eq:k-norm-from-support}, we have that 
\( \SymmetricGaugeNorm{k}{\cdot}
=
\sigma_{ \cup_{\cardinal{K} \leq k} \BALL_{K} } 
=
\sigma_{ \cup_{\cardinal{K} \leq k} \SPHERE_{K} } \).
Then, as is well-known in convex analysis, we deduce that
\( 
\closedconvexhull\bp{ \SupportBall{k}{\BALL} } 
=
\closedconvexhull\bp{ \bigcup_{ \cardinal{K} \leq k} \BALL_{K} }
=
\closedconvexhull\bp{ \bigcup_{ \cardinal{K} \leq k} \SPHERE_{K} }
\).
As the unit ball~\( \SupportBall{k}{\BALL} \) is closed and convex,
we immediately obtain~\eqref{eq:dual_support_norm_unit_ball}.

If we take over the proof using the property that
 \( \sigma_{ \cup_{\cardinal{K} \leq k} \BALL_{K} } 
= \sigma_{ \cup_{\cardinal{K} \leq k} \SPHERE_{K} } 
= \sigma_{ \cup_{\cardinal{K} = k} \BALL_{K} } 
= \sigma_{ \cup_{\cardinal{K} = k} \SPHERE_{K} } \)
in~\eqref{eq:k-norm-from-support},
we deduce that 
\( \cardinal{K} \leq k \) can be replaced by \( \cardinal{K} = k \) everywhere.
\end{proof}

\subsection{Properties of the level sets of the \pseudonormlzero}

A connection between the \pseudonormlzero\ in~\eqref{eq:pseudo_norm_l0} 
and the $2$-$k$-symmetric gauge norm
\( \SymmetricGaugeNorm{k}{\cdot} \) in~\eqref{eq:symmetric_gauge_norm}
is given by the (easily proved) following Proposition.

\begin{proposition}
Let \( k \in \ba{0,1,\ldots,d} \).
For any \( \primal\in\RR^d \), we have 
\begin{equation}
\lzero\np{\primal} = k 
\iff 
0=\SymmetricGaugeNorm{0}{\primal} \leq \cdots \leq 
\SymmetricGaugeNorm{k-1}{\primal} < 
    \SymmetricGaugeNorm{k}{\primal} = 
\cdots =
    \SymmetricGaugeNorm{n}{\primal} =  \norm{\primal} 
\eqfinv
    \label{eq:level_curve_l0_characterization}
    \end{equation}
from which we deduce the formula
    \begin{equation}
  \lzero\np{\primal} = \min \defset{ j \in \ba{0,1,\ldots,d} }%
{ \SymmetricGaugeNorm{j}{\primal} = \norm{\primal} }
\eqfinv
\end{equation}
with the convention that \( \SymmetricGaugeNorm{0}{\cdot} = 0 \). 
\end{proposition}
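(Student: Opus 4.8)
The plan is to reduce the entire statement to a single equivalence: for every index $j \in \{0,1,\ldots,d\}$ and every $\primal \in \RR^d$,
\[
\SymmetricGaugeNorm{j}{\primal} = \norm{\primal} \iff j \geq \lzero\np{\primal} \eqfinp
\]
Once this is established, both the characterization~\eqref{eq:level_curve_l0_characterization} and the min formula drop out. Indeed, the monotone chain $0 = \SymmetricGaugeNorm{0}{\primal} \leq \cdots \leq \SymmetricGaugeNorm{d}{\primal} = \norm{\primal}$ is already furnished by~\eqref{eq:k_approx_props}, so the only thing left to pin down is the location of the jump up to the value $\norm{\primal}$, which is exactly what the equivalence provides.

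To prove the equivalence I would argue each direction separately. For the ``easy'' direction, suppose $j \geq \lzero\np{\primal}$ and let $K_0$ be the support of $\primal$ (the set of indices of its nonzero components), so that $\cardinal{K_0} = \lzero\np{\primal} \leq j$. Then $\primal_{K_0} = \primal$, hence $\norm{\primal_{K_0}} = \norm{\primal}$; since $K_0$ is admissible in the supremum defining $\SymmetricGaugeNorm{j}{\cdot}$ in~\eqref{eq:symmetric_gauge_norm} and every $\norm{\primal_K} \leq \norm{\primal}$, we conclude $\SymmetricGaugeNorm{j}{\primal} = \norm{\primal}$. For the converse I would prove the contrapositive: if $j < \lzero\np{\primal}$, then for every $K$ with $\cardinal{K} \leq j$ the complement $-K$ must meet the support of $\primal$ (because the support has more than $j$ elements), so $\norm{\primal_{-K}} > 0$, and the orthogonal decomposition~\eqref{eq:decomposition_orthogonal} gives $\norm{\primal_K}^2 = \norm{\primal}^2 - \norm{\primal_{-K}}^2 < \norm{\primal}^2$. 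Since the supremum in~\eqref{eq:symmetric_gauge_norm} ranges over finitely many subsets $K$, it is attained, so $\SymmetricGaugeNorm{j}{\primal} < \norm{\primal}$ strictly.

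The main obstacle is precisely this strictness in the converse direction: a merely non-strict bound $\norm{\primal_K} \leq \norm{\primal}$ would fail to separate the indices $j < \lzero\np{\primal}$ from those with $j \geq \lzero\np{\primal}$, and the crucial point is that the supremum is a maximum over a finite family, each member of which is strictly below $\norm{\primal}$, so the maximum itself stays strictly below. With the equivalence in hand, the characterization~\eqref{eq:level_curve_l0_characterization} follows by setting $k = \lzero\np{\primal}$: the equivalence forces $\SymmetricGaugeNorm{j}{\primal} = \norm{\primal}$ for $j \geq k$ and $\SymmetricGaugeNorm{j}{\primal} < \norm{\primal}$ for $j \leq k-1$, and combining with the monotonicity~\eqref{eq:k_approx_props} yields the displayed chain with its single strict inequality at position $k$. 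Conversely, a vector satisfying that chain has $\SymmetricGaugeNorm{k}{\primal} = \norm{\primal}$ and $\SymmetricGaugeNorm{k-1}{\primal} < \norm{\primal}$, so the equivalence gives $\lzero\np{\primal} \leq k$ and $\lzero\np{\primal} > k-1$, that is $\lzero\np{\primal} = k$. Finally the min formula is immediate, since the equivalence identifies the set $\defset{j}{\SymmetricGaugeNorm{j}{\primal} = \norm{\primal}}$ with $\{\lzero\np{\primal},\ldots,d\}$, whose minimum is $\lzero\np{\primal}$.
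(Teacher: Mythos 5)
Your proof is correct. The paper itself offers no proof of this proposition (it is introduced as ``easily proved''), so there is no argument to compare against; your reduction to the single equivalence \( \SymmetricGaugeNorm{j}{\primal} = \norm{\primal} \iff j \geq \lzero\np{\primal} \) is a clean way to fill that gap, and you correctly identify and handle the one genuinely delicate point, namely that the strict inequality for \( j < \lzero\np{\primal} \) requires both the orthogonal decomposition \( \norm{\primal_K}^2 = \norm{\primal}^2 - \norm{\primal_{-K}}^2 \) and the fact that the supremum over subsets \( K \) is a maximum over a finite family, so that strictness survives the supremum.
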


We prove the following Proposition about the level sets of the \pseudonormlzero.

\begin{proposition}
Let \( k \in \ba{0,1,\ldots,d} \).
The level set \( \LevelSet{\lzero}{k} \) 
in~\eqref{eq:level_set_pseudonormlzero}
of the \pseudonormlzero\ in~\eqref{eq:pseudo_norm_l0} satisfies
\begin{subequations}
\begin{align}
\bp{\forall \primal \in \RR^d } \qquad
& 
\primal \in \LevelSet{\lzero}{k} 
\iff
\lzero\np{\primal} \leq k
\iff 
    \SymmetricGaugeNorm{k}{\primal} = \norm{\primal} 
    \eqfinv
    \label{equiv_norm0}
\\
\bp{\forall \primal \in \RR^d } \qquad
& 
\primal \in \LevelSet{\lzero}{k}\backslash\{0\}
\iff
0<\lzero\np{\primal} \leq k \iff
\primal\neq 0 \mtext{ and }
\frac{\primal}{\norm{\primal}} \in \SPHERE \cap \SymmetricGauge{k}{\SPHERE}  
\eqfinv
    \label{equiv_norm0_bis}
  \end{align}
\end{subequations}
and its intersection with the sphere~$\SPHERE$ has the three following
expressions
\begin{subequations}
  \begin{align}
    \SPHERE \cap \LevelSet{\lzero}{k} 
    &=
      \bigcup_{ {\cardinal{K} \leq k}} \SPHERE_{K} 
      = \bigcup_{ {\cardinal{K} = k}} \SPHERE_{K} 
      \eqfinv
      \label{eq:level_set_l0_inter_sphere_a}
    \\
    \SPHERE \cap \LevelSet{\lzero}{k} 
    &=
      \SPHERE \cap \SupportBall{k}{\BALL} 
      \eqfinv
      \label{eq:level_set_l0_inter_sphere_b}
    \\
    \SPHERE \cap \LevelSet{\lzero}{k} 
    &=
      \overline{ \SPHERE \cap \LevelCurve{\lzero}{k} }
      \eqfinp
      \label{eq:closure_level_curve}
  \end{align}
\end{subequations}
\end{proposition}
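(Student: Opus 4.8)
The plan is to reduce every claim to the behaviour of the nondecreasing sequence $j \mapsto \SymmetricGaugeNorm{j}{\primal}$ governed by the level-curve characterization~\eqref{eq:level_curve_l0_characterization}. First I would establish the equivalences~\eqref{equiv_norm0}. By~\eqref{eq:k_approx_props} this sequence is nondecreasing with top value $\SymmetricGaugeNorm{d}{\primal} = \norm{\primal}$, and by~\eqref{eq:level_curve_l0_characterization} it attains this value precisely from the index $\lzero\np{\primal}$ on; hence $\SymmetricGaugeNorm{k}{\primal} = \norm{\primal}$ holds if and only if $k \geq \lzero\np{\primal}$, which is exactly $\primal \in \LevelSet{\lzero}{k}$. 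For~\eqref{equiv_norm0_bis}, taking $\primal \neq 0$ and applying~\eqref{equiv_norm0} to the unit vector $\normalized\np{\primal} = \primal/\norm{\primal}$, together with the invariance $\lzero \circ \normalized = \lzero$ from~\eqref{eq:pseudonormlzero_invariance_normalized}, turns $\lzero\np{\primal} \leq k$ into $\SymmetricGaugeNorm{k}{\normalized\np{\primal}} = 1$, that is, into $\normalized\np{\primal} \in \SPHERE \cap \SymmetricGauge{k}{\SPHERE}$ via the description~\eqref{eq:symmetric_gauge_norm_unit-sphere} of the unit sphere of $\SymmetricGaugeNorm{k}{\cdot}$.

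For~\eqref{eq:level_set_l0_inter_sphere_a}, I would argue directly on supports: a point $\primal \in \SPHERE \cap \LevelSet{\lzero}{k}$ has support $K_0$ with $\cardinal{K_0} = \lzero\np{\primal} \leq k$, so $\primal_{-K_0} = 0$ and $\norm{\primal_{K_0}} = \norm{\primal} = 1$, i.e.\ $\primal \in \SPHERE_{K_0}$ by~\eqref{eq:SPHERE_K}; the reverse inclusion is immediate, and $\bigcup_{\cardinal{K} \leq k} \SPHERE_{K} = \bigcup_{\cardinal{K} = k} \SPHERE_{K}$ follows because $\SPHERE_{K} \subseteq \SPHERE_{K'}$ whenever $K \subseteq K'$ (enlarging a support to cardinal exactly $k$, which is possible since $k \leq d$). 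For~\eqref{eq:closure_level_curve}, one inclusion is free: $\LevelSet{\lzero}{k}$ is a finite union of coordinate subspaces, hence closed (equivalently, $\lzero$ is lsc), so $\SPHERE \cap \LevelSet{\lzero}{k}$ is closed and contains $\SPHERE \cap \LevelCurve{\lzero}{k}$. For the reverse inclusion, given $\primal$ with $\lzero\np{\primal} = m \leq k$, I would switch on $k - m$ coordinates outside the support of $\primal$ with vanishing weights and renormalize, obtaining a family in $\SPHERE \cap \LevelCurve{\lzero}{k}$ converging to $\primal$.

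The delicate step is~\eqref{eq:level_set_l0_inter_sphere_b}, relating $\lzero$ to the $k$-support ball. The inclusion $\SPHERE \cap \LevelSet{\lzero}{k} \subseteq \SPHERE \cap \SupportBall{k}{\BALL}$ follows from~\eqref{eq:level_set_l0_inter_sphere_a} and $\SPHERE_{K} \subseteq \SupportBall{k}{\BALL} = \closedconvexhull\bp{\bigcup_{\cardinal{K} \leq k} \SPHERE_{K}}$, which is~\eqref{eq:dual_support_norm_unit_ball}. For the converse, I would use the duality~\eqref{eq:support_norm} between $\SymmetricGaugeNorm{k}{\cdot}$ and $\SupportNorm{k}{\cdot}$: from Definition~\ref{de:dual_norm} the defining dual-norm inequality gives $\norm{\primal}^2 = \proscal{\primal}{\primal} \leq \SymmetricGaugeNorm{k}{\primal}\,\SupportNorm{k}{\primal}$, so for $\primal \in \SPHERE \cap \SupportBall{k}{\BALL}$ (where $\SupportNorm{k}{\primal} \leq 1$ and $\norm{\primal} = 1$) this bound together with $\SymmetricGaugeNorm{k}{\primal} \leq \norm{\primal}$ from~\eqref{eq:k_approx_props} forces $\SymmetricGaugeNorm{k}{\primal} = \norm{\primal}$, whence $\lzero\np{\primal} \leq k$ by~\eqref{equiv_norm0}. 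I expect this to be the main obstacle, since it is the only point where the Euclidean self-duality $\proscal{\primal}{\primal} = \norm{\primal}^2$ and the pairing of a norm with its dual are genuinely used; the remaining arguments are bookkeeping with supports and closedness.
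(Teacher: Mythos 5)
Your proof is correct, and on the one genuinely delicate identity, \eqref{eq:level_set_l0_inter_sphere_b}, it takes a different route from the paper. The paper obtains \eqref{eq:level_set_l0_inter_sphere_b} from \eqref{eq:level_set_l0_inter_sphere_a} and \eqref{eq:dual_support_norm_unit_ball} via the separate Lemma~\ref{lemma:convex_env} (for a closed subset $A$ of the Euclidean sphere, $A=\closedconvexhull(A)\cap\SPHERE$), whose proof runs through extreme points, the strict convexity of the Euclidean ball, and the compactness of convex hulls in finite dimension; this gives
\( \SPHERE\cap\LevelSet{\lzero}{k}
=\SPHERE\cap\closedconvexhull\bp{\bigcup_{\cardinal{K}\leq k}\SPHERE_{K}}
=\SPHERE\cap\SupportBall{k}{\BALL} \).
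You instead use the dual-norm inequality
\( \norm{\primal}^2=\proscal{\primal}{\primal}\leq
\SymmetricGaugeNorm{k}{\primal}\,\SupportNorm{k}{\primal} \),
which for \( \primal\in\SPHERE\cap\SupportBall{k}{\BALL} \) squeezes
\( \SymmetricGaugeNorm{k}{\primal} \) between \( 1 \) and \( \norm{\primal}=1 \) by \eqref{eq:k_approx_props}, and then lands back in \eqref{equiv_norm0}. This is more elementary --- no extreme-point or compactness arguments --- and it isolates precisely where the Euclidean self-duality \( \proscal{\primal}{\primal}=\norm{\primal}^2 \) and the pairing of \( \SymmetricGaugeNorm{k}{\cdot} \) with \( \SupportNorm{k}{\cdot} \) are used; the price is that it exploits the specific norms at hand, whereas the paper's lemma is a general geometric fact about the sphere. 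Your remaining steps match the paper's up to reorganization: \eqref{equiv_norm0} and \eqref{equiv_norm0_bis} are read off \eqref{eq:level_curve_l0_characterization} exactly as in the paper; for \eqref{eq:level_set_l0_inter_sphere_a} you argue directly on supports instead of passing through \( \SymmetricGauge{k}{\SPHERE} \) and \eqref{eq:sphere_and_esphere_K}, which is a harmless shortcut; and for \eqref{eq:closure_level_curve} your perturb-and-renormalize construction is the paper's argument with its two stages (first \( \overline{\LevelCurve{\lzero}{k}}=\LevelSet{\lzero}{k} \), then renormalization of an approximating sequence) merged into one.
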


\begin{proof}

\noindent $\bullet$ 
The Equivalence~\eqref{equiv_norm0} easily follows 
from~\eqref{eq:level_curve_l0_characterization}.
\medskip

\noindent $\bullet$ 
We prove the Equivalence~\eqref{equiv_norm0_bis}.
Indeed, using Equation~\eqref{equiv_norm0} we have that,
for \( \primal \in \RR^d\backslash\{0\} \):
\begin{align*}
  \lzero\np{\primal} \leq k 
  &  \iff \SymmetricGaugeNorm{k}{\primal} = \norm{\primal} 
    \iff \SymmetricGaugeNorm{k}{\frac{\primal}{\norm{\primal}}} = 1 
    \iff \frac{\primal}{\norm{\primal}} \in \SymmetricGauge{k}{\SPHERE} 
    \iff \frac{\primal}{\norm{\primal}} \in 
\SPHERE \cap \SymmetricGauge{k}{\SPHERE}  
\eqfinp
\end{align*}
\medskip

\noindent $\bullet$ 
We prove Equation~\eqref{eq:level_set_l0_inter_sphere_a}:
  \begin{align*}
\SPHERE \cap \LevelSet{\lzero}{k} 
&= 
\defset{ \primal\in\RR^d }{ \norm{\primal} =1 \mtext{ and }
\lzero\np{\primal} \leq k }
\tag{ by definitions~\eqref{eq:Euclidian_SPHERE} of~$\SPHERE$ 
and \eqref{eq:level_curve_pseudonormlzero} of \( \LevelSet{\lzero}{k} \) }
\\
&= 
\defset{ \primal\in\RR^d }{ \norm{\primal} =1 \mtext{ and }
   \SymmetricGaugeNorm{k}{\primal} = \norm{\primal} }
\tag{by~\eqref{equiv_norm0}}
\\
&= 
\defset{ \primal\in\RR^d }{ \norm{\primal} =1 \mtext{ and }
   \SymmetricGaugeNorm{k}{\primal} = 1 }
\\
    &= 
    \SPHERE \cap \SymmetricGauge{k}{\SPHERE} 
\tag{ by definitions~\eqref{eq:Euclidian_SPHERE} and \eqref{eq:symmetric_gauge_norm_unit-sphere} of the spheres $\SPHERE$ 
and \( \SymmetricGauge{k}{\SPHERE} \) }
\\
    &= 
      \SPHERE \cap \SymmetricGauge{k}{\BALL} \cap
         \Bp{ \bigcup_{\cardinal{K} \leq k} \ESPHERE_{K} }
\tag{by property~\eqref{eq:symmetric_gauge_norm_unit-sphere} of the sphere
\( \SymmetricGauge{k}{\SPHERE} \) }
\\
    &= 
      \SPHERE \cap
         \Bp{ \bigcup_{\cardinal{K} \leq k} \ESPHERE_{K} }
\tag{as, by~\eqref{eq:k_approx_props}, 
we have that $\SPHERE \subset \SymmetricGauge{k}{\BALL}$ } 
\\
    &= \bigcup_{ {\cardinal{K} \leq k}} \bp{ \SPHERE \cap \ESPHERE_{K} }
\\
    &= 
    \bigcup_{ {\cardinal{K} \leq k}} \SPHERE_{K} 
\tag{ as \( \SPHERE \cap \ESPHERE_{K} = \SPHERE_{K} \) 
by~\eqref{eq:sphere_and_esphere_K} }
\eqfinp
\end{align*}
If we take over the proof where we use 
\(       \SymmetricGauge{k}{\SPHERE} 
= \SymmetricGauge{k}{\BALL} \cap
         \Bp{ \bigcup_{\cardinal{K} = k} \ESPHERE_{K} } \)
in~\eqref{eq:symmetric_gauge_norm_unit-sphere}, we obtain that
\( \SPHERE \cap \LevelSet{\lzero}{k} 
= \bigcup_{ {\cardinal{K} = k}} \SPHERE_{K} \). 
\medskip

\noindent $\bullet$ 
We prove Equation~\eqref{eq:level_set_l0_inter_sphere_b}.
First, we observe that the level set \( \LevelSet{\lzero}{k} \) is closed
because, by~\eqref{equiv_norm0}, it can be expressed as 
\( \LevelSet{\lzero}{k} = \defset{ \primal\in\RR^d }%
{ \SymmetricGaugeNorm{k}{\primal} = \norm{\primal} } \).
This also follows from the well-known property that 
the \pseudonormlzero~$\lzero$ is lower semi continuous.
Second, we have
\begin{align*}
  \SPHERE \cap \LevelSet{\lzero}{k} 
&= \SPHERE \cap 
\closedconvexhull\bp{\SPHERE \cap \LevelSet{\lzero}{k}} 
    \tag{by Lemma~\ref{lemma:convex_env} since 
\( \SPHERE \cap \LevelSet{\lzero}{k} \subset \SPHERE \) 
and is closed }
\\
&= \SPHERE \cap 
\closedconvexhull\bp{ \bigcup_{ {\cardinal{K} \leq k}} \SPHERE_{K} }
\tag{ as \( \SPHERE \cap \LevelSet{\lzero}{k} =
\bigcup_{ {\cardinal{K} \leq k}} \SPHERE_{K} \) 
by~\eqref{eq:level_set_l0_inter_sphere_a} }
\\
&= \SPHERE \cap \SupportBall{k}{\BALL} 
\tag{ as 
\( \closedconvexhull\bp{ \bigcup_{ {\cardinal{K} \leq k}} \SPHERE_{K} }
= \SupportBall{k}{\BALL} \) 
by~\eqref{eq:dual_support_norm_unit_ball} }
\eqfinp
\end{align*}
\medskip

\noindent $\bullet$ 
We prove Equation~\eqref{eq:closure_level_curve}.
For this purpose, we first establish the (known) fact that 
\( \overline{ \LevelCurve{\lzero}{k} } = \LevelSet{\lzero}{k} \). 
The inclusion \( \overline{ \LevelCurve{\lzero}{k} } 
\subset \LevelSet{\lzero}{k} \) is easy.
Indeed, as we have seen that \( \LevelSet{\lzero}{k} \) is closed, 
we have \( \LevelCurve{\lzero}{k} \subset \LevelSet{\lzero}{k} 
\Rightarrow
\overline{ \LevelCurve{\lzero}{k} } \subset 
\overline{ \LevelSet{\lzero}{k} } = \LevelSet{\lzero}{k} \).
There remains to prove the reverse inclusion
\( \LevelSet{\lzero}{k} \subset \overline{ \LevelCurve{\lzero}{k} } \).
For this purpose, we consider
\( \primal \in \LevelSet{\lzero}{k} \). 
If \( \primal \in \LevelCurve{\lzero}{k} \), obviously 
\( \primal \in \overline{ \LevelCurve{\lzero}{k} } \).
Therefore, we suppose that \( \lzero\np{\primal}=l < k \).
By definition of \( \lzero\np{\primal} \), there exists 
\( L \subset \ba{1,\ldots,d} \) such that 
\( \cardinal{L}=l < k \) and \( \primal = \primal_L \).
For \( \epsilon > 0 \), define \( \primal^\epsilon \) as
coinciding with  \( \primal \) except for 
$k-l$ indices outside~$L$ for which the components are 
\( \epsilon > 0 \).
By construction \( \lzero\np{\primal^\epsilon}=k \) and
\( \primal^\epsilon \to \primal \) when \( \epsilon \to 0 \).
This proves that 
\( \LevelSet{\lzero}{k} \subset \overline{ \LevelCurve{\lzero}{k} } \).

Second, we prove that \( \SPHERE \cap \LevelSet{\lzero}{k} 
= \overline{ \SPHERE \cap \LevelCurve{\lzero}{k} } \).
The inclusion 
\( \overline{ \SPHERE \cap \LevelCurve{\lzero}{k} } 
\subset \SPHERE \cap \LevelSet{\lzero}{k} \),
is easy.
Indeed, 
\( \overline{ \LevelCurve{\lzero}{k} } = \LevelSet{\lzero}{k} 
\Rightarrow
\overline{ \SPHERE \cap \LevelCurve{\lzero}{k} } \subset 
\overline{ \SPHERE } \cap \overline{ \LevelCurve{\lzero}{k} } = 
\SPHERE \cap \LevelSet{\lzero}{k} \).
To prove the reverse inclusion
\( \SPHERE \cap \LevelSet{\lzero}{k} 
\subset \overline{ \SPHERE \cap \LevelCurve{\lzero}{k} } \),
we consider \( \primal \in \SPHERE \cap \LevelSet{\lzero}{k} \).
As we have just seen that \( \LevelSet{\lzero}{k} = 
\overline{ \LevelCurve{\lzero}{k} }\), 
we deduce that \( \primal \in \overline{ \LevelCurve{\lzero}{k} }\).
Therefore, there exists a sequence
\( \sequence{z_n}{n\in\NN} \) in \( \LevelCurve{\lzero}{k} \)
such that \( z_n \to \primal \) when \( n \to +\infty \).
Since \( \primal \in \SPHERE \), we can always suppose that 
\( z_n \neq 0 \), for all $n\in\NN$. Therefore \( z_n/\norm{z_n} \) is well
defined and, when \( n \to +\infty \), 
we have \( z_n/\norm{z_n} \to \primal/\norm{\primal}=\primal \)
since \( \primal \in \SPHERE = \defset{\primal \in \PRIMAL}{\norm{\primal} = 1} \).
Now, on the one hand, 
\( z_n/\norm{z_n} \in \LevelCurve{\lzero}{k} \), for all $n\in\NN$,
and, on the other hand, \( z_n/\norm{z_n} \in \SPHERE \).
As a consequence \( z_n/\norm{z_n} \in \SPHERE \cap \LevelCurve{\lzero}{k} \),
and we conclude that \( \primal \in 
\overline{ \SPHERE \cap \LevelCurve{\lzero}{k} } \). 
Thus, we have proved that 
\( \SPHERE \cap \LevelSet{\lzero}{k} 
\subset \overline{ \SPHERE \cap \LevelCurve{\lzero}{k} } \).
\medskip

This ends the proof. 
\end{proof}

\begin{lemma} 
  If $A$ is a subset of the Euclidian sphere~$\SPHERE$ of~$\RR^d$, 
  then $A = \convexhull(A) \cap \SPHERE$. If $A$ is a closed subset of the Euclidian sphere~$\SPHERE$ of~$\RR^d$, 
  then $A = \closedconvexhull(A) \cap \SPHERE$.
  \label{lemma:convex_env}
\end{lemma}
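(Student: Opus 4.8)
The plan is to prove each of the two set equalities by showing both inclusions, the nontrivial one resting entirely on the strict convexity of the Euclidian ball.

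For the first assertion, the inclusion $A \subset \convexhull(A) \cap \SPHERE$ is immediate, since $A \subset \convexhull(A)$ always holds and $A \subset \SPHERE$ by hypothesis. For the reverse inclusion, I would take $\primal \in \convexhull(A) \cap \SPHERE$ and write it as a convex combination $\primal = \sum_{i} \lambda_i a_i$ with $\lambda_i > 0$, $\sum_i \lambda_i = 1$ and $a_i \in A$, so that $\norm{a_i}=1$ for every~$i$. The key computation would be
\[
\norm{\primal}^2 = \sum_{i,j} \lambda_i \lambda_j \proscal{a_i}{a_j}
\leq \sum_{i,j} \lambda_i \lambda_j = 1 \eqfinv
\]
where the inequality uses the Cauchy--Schwarz bound $\proscal{a_i}{a_j} \leq \norm{a_i}\norm{a_j}=1$. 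Since $\primal \in \SPHERE$ gives $\norm{\primal}^2=1$, equality must hold throughout; as all the products $\lambda_i\lambda_j$ are strictly positive, this forces $\proscal{a_i}{a_j}=1$ for every pair $(i,j)$. Equality in Cauchy--Schwarz between two unit vectors forces $a_i=a_j$, so all the $a_i$ coincide and $\primal=a_1 \in A$. This yields $\convexhull(A)\cap\SPHERE \subset A$, and hence the first equality.

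For the second assertion, I would reduce it to the first by observing that $A$ is compact. Indeed, $A\subset\SPHERE$ is bounded and closed by hypothesis, hence compact in~$\RR^d$. The convex hull of a compact subset of a finite-dimensional space is again compact, by Carathéodory's theorem together with the continuity of the barycenter map on the product of the unit simplex with $A^{d+1}$. Therefore $\convexhull(A)$ is closed, so that $\closedconvexhull(A)=\convexhull(A)$, and the second equality follows verbatim from the first.

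The main obstacle is the strict-convexity step: the whole lemma hinges on the fact that a convex combination of distinct points of the Euclidian sphere lands strictly inside the ball, which is precisely where the Euclidian structure is used. The statement would fail for a non strictly convex norm, such as the $\ell^\infty$ norm, for which entire faces of the unit sphere are flat. Everything else is routine: the trivial inclusions on one side, and the standard compactness fact reducing the closed-convex-hull case to the convex-hull case on the other.
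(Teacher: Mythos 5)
Your proof is correct, and the key step is argued by a genuinely different route from the paper's. Where you expand \( \norm{\primal}^2=\sum_{i,j}\lambda_i\lambda_j\proscal{a_i}{a_j} \) and exploit the equality case of Cauchy--Schwarz to force all the \( a_i \) to coincide, the paper instead argues by contradiction through extreme points: it shows that \( \convexhull(A)\cap\SPHERE\subset\mathrm{extr}\np{\convexhull(A)} \) (if \( x=\lambda y+(1-\lambda)z \) with \( y\neq z \) in the unit ball, strict convexity puts \( x \) off the sphere), and then invokes the external fact \( \mathrm{extr}\np{\convexhull(A)}\subset A \), cited as an exercise in Hiriart-Urruty's textbook. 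Both arguments ultimately rest on the strict convexity of the Euclidian ball, as you rightly emphasize with the \( \ell^\infty \) counterexample, but yours is entirely self-contained and quantitative, while the paper's is shorter at the price of a citation. For the closed case the two proofs coincide: compactness of \( A \), compactness of \( \convexhull(A) \) in finite dimension (you via Carath\'eodory and the continuous barycenter map, the paper via Rockafellar's Theorem~17.2), hence \( \closedconvexhull(A)=\convexhull(A) \) and reduction to the first part. No gaps.
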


\begin{proof} 
We first prove that $A = \convexhull(A) \cap \SPHERE$ when $A \subset \SPHERE$. 
Since $A \subset \convexhull(A)$ and $A\subset \SPHERE$, 
we immediately get that $A \subset\convexhull(A) \cap \SPHERE$.
To prove the reverse inclusion, we first start by proving that 
$\convexhull(A)\cap \SPHERE \subset \mathrm{extr}(\convexhull(A))$,
the set of extreme points of~$\convexhull(A)$.
  
The proof is by contradiction.
Suppose indeed that there exists $x\in \convexhull(A)\cap \SPHERE$ 
and $x\not\in \mathrm{extr}(\convexhull(A))$. Then, we could find 
$y \in \convexhull(A)$ and $z \in \convexhull(A)$, distinct from $x$,
and such that $x = \lambda y + (1-\lambda) z$ for some $\lambda\in (0,1)$.
Notice that necessarily \( y \neq z \) (because, else, we would have
$x=y=z$ which would contradict $y \neq x$ and $z \neq x$). 
By assumption $A\subset \SPHERE$, 
we deduce that $\convexhull(A) \subset \BALL= \defset{\primal \in
  \PRIMAL}{\norm{\primal} \leq 1}$, the unit ball, 
and therefore that \( \norm{y} \leq 1 \) and \( \norm{z} \leq 1 \).
If $y$ or $z$ were not in $\SPHERE$ --- that is, if either
\( \norm{y} < 1 \) or \( \norm{z} < 1 \) --- then we would obtain that
\( \norm{x} \leq \lambda \norm{y} + (1-\lambda) \norm{z} < 1 \) 
since 
$\lambda\in (0,1)$;
we would thus arrive at a contradiction since $x$ could not be in $\SPHERE$. 
Thus, both $y$ and $z$ must be in $\SPHERE$, 
and we have a contradiction since no $x \in \SPHERE$, 
the Euclidian sphere, can be obtained as a convex combination of 
$y \in \SPHERE$ and $z \in \SPHERE$, with \( y \neq z \).

Hence, we have proved by contradiction that 
$\convexhull(A)\cap \SPHERE \subset \mathrm{extr}(\convexhull(A))$.
We can conclude using the fact that 
$\mathrm{extr}(\convexhull(A)) \subset A$ 
(see \cite[Exercice 6.4]{hiriart1998optimisation}).

Now, we consider the case where the subset $A$ of the Euclidian sphere~$\SPHERE$
is closed.  Using the first part of the proof we have that
$A= \convexhull(A) \cap \SPHERE$.  Now, $A$ is closed by assumption and bounded
since $A\subset \SPHERE$. Thus, $A$ is compact and in a finite
dimensional space we have that $\convexhull(A)$ is compact~\cite[Th.~17.2]{Rockafellar:1970}, thus closed. We conclude that 
$A= \convexhull(A) \cap \SPHERE = \overline{\convexhull(A)} \cap \SPHERE = \closedconvexhull(A) \cap \SPHERE$, where the last equality comes from~\cite[Prop.~3.46]{Bauschke-Combettes:2017}.
\end{proof}

\newcommand{\noopsort}[1]{} \ifx\undefined\allcaps\def\allcaps#1{#1}\fi

\end{document}